\renewcommand{\pod}[1]{\mathchoice
  {\allowbreak \if@display \mkern 18mu\else \mkern 8mu\fi (#1)}
  {\allowbreak \if@display \mkern 18mu\else \mkern 8mu\fi (#1)}
  {\mkern4mu(#1)}
  {\mkern4mu(#1)}
}
\newtheorem{theorem}{Theorem}[section]
\newtheorem{lemma}[theorem]{Lemma}
\newtheorem{proposition}[theorem]{Proposition}
\theoremstyle{remark}
\newtheorem{example}[theorem]{Example}
\theoremstyle{definition}
\newcommand{\ord}{\mathrm{ord}}
\newcommand{\cC}{\mathcal{C}}
\newcommand{\cT}{\mathcal{T}}
\newcommand{\eps}{\epsilon}
\newcommand{\bZ}{\mathbb{Z}}
\newcommand{\R}{\mathbb{R}}
\newcommand{\Z}{\mathbb{Z}}
\newcommand{\Q}{\mathbb{Q}}
\newcommand{\mfg}{\mathfrak{g}}
\newcommand{\mfa}{\mathfrak{a}}
\newcommand{\mfj}{\mathfrak{j}}
\newcommand{\mfm}{\mathfrak{m}}
\newcommand{\mfp}{\mathfrak{p}}
\newcommand{\mfq}{\mathfrak{q}}
\newcommand{\Id}{\mathrm{Id}}
\newcommand{\PrinFrac}{\mathrm{PrinFrac}}
\newcommand{\Cl}{\mathrm{Cl}}
   \def\MR#1{}
\title[Irreducible elements in rings of integers of number fields]{Two problems concerning irreducible elements in rings of integers of number fields}
\begin{document}

\author{Paul Pollack}
\address{Department of Mathematics, Boyd Graduate Studies Research Center, University of Georgia, Athens, GA 30602, United States}
\email{pollack@uga.edu}

\author{Lee Troupe}
\address{Department of Mathematics, University of British Columbia, Vancouver, British Columbia V6T 1Z2, Canada}
\email{ltroupe@math.ubc.ca}

\begin{abstract} Let $K$ be a number field with ring of integers $\Z_K$. We prove two asymptotic formulas connected with the distribution of irreducible elements in $\Z_K$. First, we estimate the maximum number of nonassociated irreducibles dividing a nonzero element of $\Z_K$ of norm not exceeding $x$ (in absolute value), as $x\to\infty$. Second, we count the number of irreducible elements of $\Z_K$ of norm not exceeding $x$ lying in a given arithmetic progression (again, as $x\to\infty$). When $K=\Q$, both results are classical; a new feature in the general case is the influence of combinatorial properties of the class group of $K$.
\end{abstract}

\maketitle

\section{Introduction}  Let $K$ be an algebraic number field with corresponding ring of integers $\Z_K$. In this paper, we take two well-known analytic results about rational prime numbers and prove  generalizations for the irreducible elements of $\Z_K$. Our results can be seen as contributing to the program of understanding how combinatorial attributes of the class group of $K$ influence factorization properties of the domain $\Z_K$. Several previous results in this direction are discussed in Chapter 9 of Narkiewicz's monograph \cite{narkiewicz04}.

For each nonzero $\alpha \in \Z_K$, we let $\nu(\alpha)$ denote the number of nonassociate irreducible elements of $\Z_K$ dividing $\alpha$. We view $\nu(\cdot)$ as a generalization to $\Z_K$ of the classical arithmetic function $\omega(\cdot)$, which counts the number of distinct prime factors of its (rational integer) argument.

A 1940 theorem of Erd\H{o}s and Kac asserts, roughly speaking, that $\omega(n)$ is normally distributed with mean $\log \log |n|$ and standard deviation $\sqrt{\log \log |n|}$.  More precisely, for each fixed real $u$,
\[ \frac{\#\{3 \le n \le x: \omega(n) \le \log \log n + u \sqrt{\log\log{n}}\}}{\#\{3\le n \le x\}} \to \frac{1}{\sqrt{2\pi}} \int_{-\infty}^{u} e^{-t^2/2}\, dt, \]
as $x\to\infty$. This theorem of Erd\H{o}s and Kac sharpens, in a striking way, the 1917 result of Hardy and Ramanujan \cite{hr17} that $\omega(n)$ has normal order $\log \log n$.  Recently \cite{poleek}, the first-named author proved the following generalization of Erd\H{o}s--Kac: For every number field $K$, there are positive constants $A$ and $B$, and a positive integer $D$, such that $\nu(\alpha)$ is normally distributed with mean $A(\log\log{|N(\alpha)|})^{D}$ and standard deviation $B(\log \log {|N(\alpha)|})^{D-1/2}$. The constants $A, B$, and $D$ depend on $K$ only through its class group. When the class group is trivial, $A=B=D=1$, and so this result does indeed contain the original Erd\H{o}s--Kac theorem.

While $\omega(n)$ is usually quite close to $\log\log{n}$, it occasionally gets much larger. For each $x \ge 2$, the maximum value of $\omega(n)$ on the integers $n\le x$ is assumed when $n$ is the product of a certain initial segment of primes. This easy observation, together with the prime number theorem, implies in a straightforward way (cf. \cite[p. 471]{hw00}) that
\begin{equation}\label{eq:omegalimsup} \max_{n\le x} \omega(n) = (1+o(1))\frac{\log{x}}{\log\log{x}}, \end{equation}
as $x\to\infty$. Our first goal is to supply a corresponding description of the maximum value of $\nu(\alpha)$ for $\alpha \in \Z_K$ with $|N(\alpha)| \le x$, for an arbitrary number field $K$. The answer in the general case is decidedly more interesting than the case $K=\Q$, and the proof, while similar in spirit, is correspondingly more subtle.

For a finite abelian group $G$ (described multiplicatively), we let $D(G)$ denote the \emph{Davenport constant of $G$}, i.e., the smallest positive integer with the property that any sequence of elements of $G$ of length $D(G)$ possesses a nonempty subsequence whose product is the identity. We write $D$ for the Davenport constant of the class group of $K$. (This is the same value of $D$ that appears in the result from \cite{poleek} quoted above.) We let $h$ denote the class number of $K$. Our first main theorem is the following.

\begin{theorem}\label{thm:1stmain} As $x\to\infty$, we have
\[
\max_{\alpha:~0<|N(\alpha)|\le x} \nu(\alpha) = (1+o(1)) \cdot M \cdot \Big( \frac{\log x}{h\log\log x} \Big)^D.
\]
Here $M$ is a positive constant, depending only on the class group of $K$, to be specified in the course of the proof.
\end{theorem}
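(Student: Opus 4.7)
The plan is to prove matching lower and upper bounds on $\max_{0<|N(\alpha)|\le x}\nu(\alpha)$. The organizing device is the standard bijection between nonassociated irreducible elements of $\Z_K$ and minimal zero-sum sequences over $\Cl(K)$: an irreducible $\pi$ with principal factorization $(\pi)=\mfp_1\cdots\mfp_k$ (prime ideals listed with multiplicity) corresponds to the multiset of classes $([\mfp_1],\ldots,[\mfp_k])$, which sums to the identity in $\Cl(K)$, admits no proper nonempty zero-sum sub-multiset, and has length at most $D$. Such a $\pi$ divides $\alpha$ precisely when its multiset of prime-ideal factors is contained in the multiset determined by $(\alpha)=\prod\mfp_i^{a_i}$. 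Thus $\nu(\alpha)$ equals the number of minimal zero-sum sub-multisets of the class-multiset of $(\alpha)$.

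For the lower bound I would fix a minimal zero-sum sequence $S^*=\prod_{g\in\Cl(K)}g^{f_g}$ of length $D$ minimizing $\prod_g f_g!$ (possibly passing to a finite mixture of such sequences if this enlarges the leading constant) and set $y\sim h\log x/D$. By Landau's prime ideal theorem refined by the Chebotarev density theorem, each class $g\in\mathrm{supp}(S^*)$ contains $r:=\pi_K(y;g)\sim y/(h\log y)\sim\log x/(D\log\log x)$ prime ideals $\mfp^{(g)}_1,\ldots,\mfp^{(g)}_r$ of norm at most $y$; the companion estimate $\vartheta_K(y;g)\sim y/h$ gives $|N(\alpha)|\le x$ for $(\alpha):=\prod_g\prod_{j\le r}(\mfp^{(g)}_j)^{f_g}$. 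Each unordered $f_g$-multiset drawn from $\{\mfp^{(g)}_j\}_j$, chosen independently for each $g$, determines a distinct irreducible of type $S^*$ dividing $\alpha$, and hence $\nu(\alpha)\ge\prod_g\binom{r+f_g-1}{f_g}\sim r^D/\prod_g f_g!=M(\log x/(h\log\log x))^D$, with $M=(h/D)^D/\prod_g f_g!$ (or the supremum over admissible mixtures).

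For the upper bound, write $(\alpha)=\prod\mfp_i^{a_i}$ with $|N(\alpha)|\le x$, and for each class $g$ let $R_g$ denote the number of distinct prime ideals of class $g$ dividing $\alpha$. A worst-case $\alpha$ uses the $R_g$ primes of smallest norm in class $g$, whose log-norms sum to $\sim R_g\log\log x$ by the class-wise PNT. Having each such prime enter $(\alpha)$ with multiplicity at least $f_g(S)$ (the minimum required for the $\binom{R_g+f_g(S)-1}{f_g(S)}$ sub-multisets of type $S$ to be realized) yields the constraint $\sum_g f_g(S)\,R_g\log\log x\le(1+o(1))\log x$. A Lagrange multiplier optimization of the objective $\prod_g\binom{R_g+f_g(S)-1}{f_g(S)}$ against this constraint---followed by summation over length-$D$ types $S$---reproduces the bound $M(\log x/(h\log\log x))^D(1+o(1))$. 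Contributions from types of length strictly less than $D$, and from prime ideals of very large norm (of which at most $o(\log x/\log\log x)$ can divide $\alpha$), are of lower order and absorb into the $(1+o(1))$.

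The principal obstacle is the combinatorial-analytic optimization that pins down $M$: one must identify the extremal family of length-$D$ minimal zero-sum sequences, verify that no mixed strategy does better, and prove that sub-extremal configurations (shorter zero-sum sequences, unbalanced class profiles, very large primes) are negligible. This is bundled with the requirement of class-uniformity in the Chebotarev prime ideal theorem---available because $\Cl(K)$ is a fixed finite group---so that the approximations $\pi_K(y;g)\sim y/(h\log y)$ and $\vartheta_K(y;g)\sim y/h$ hold simultaneously across all classes, providing the analytic backbone of both directions of the estimate.
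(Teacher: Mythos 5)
There is a genuine gap: your model for how an irreducible of a given type sits inside $\alpha$ forces prime ideals to repeat, and this produces the wrong leading constant on both sides. An irreducible of type $\tau=(t_1,\dots,t_h)$ is a product of $t_i$ prime ideals from the class $\cC_i$ \emph{counted with multiplicity in the class}, but these may be $t_i$ \emph{distinct} primes each dividing $\alpha$ only to the first power. In your lower bound you build $(\alpha)=\prod_g\prod_{j\le r}(\mfp^{(g)}_j)^{f_g}$, paying $f_g\log N(\mfp)$ of norm budget per prime in order to harvest $\binom{r+f_g-1}{f_g}\sim r^{f_g}/f_g!$ irreducibles; the extremal configuration is instead essentially squarefree, using many more \emph{distinct} small primes per class, with the allocation among classes optimized over the simplex $\{x_i\ge 0,\ \sum x_i\le h\}$. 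Concretely, for $\Cl(K)$ cyclic of order $h\ge 2$ your construction gives $\sim\frac{1}{h!}\bigl(\frac{\log x}{h\log\log x}\bigr)^h$, whereas the true maximum is $\frac{h^h}{h!}\bigl(\frac{\log x}{h\log\log x}\bigr)^h=\frac{1}{h!}\bigl(\frac{\log x}{\log\log x}\bigr)^h$: take (essentially) the product of all distinct primes of class $\cC_1$ of norm up to $h\log x$ and count $\binom{\omega_1}{h}$ squarefree divisors of type $(h,0,\dots,0)$. So your constant $(h/D)^D/\prod_g f_g!$ is too small by a factor $h^h$ in this case, and your hedge about ``mixtures of sequences'' cannot repair this, since the defect is in the within-class multiplicity accounting, not in which types you mix. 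The correct constant is $M=\max_{\Delta}P$, where $P(x_1,\dots,x_h)=\sum_{\tau\ \mathrm{maximal}}\prod_i x_i^{t_i}/t_i!$ and $\Delta=\{x_i\ge 0,\ \sum_i x_i\le h\}$ (this is how the paper specifies $M$), and in general it is strictly larger than your candidate.

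The upper bound has the mirror-image flaw: the asserted constraint ``each prime of class $g$ must enter $(\alpha)$ with multiplicity at least $f_g(S)$'' is not a necessary condition, since sub-multisets of type $S$ with $f_g(S)\ge 2$ can be assembled from distinct primes of class $g$ each appearing once in $(\alpha)$. Your constraint $\sum_g f_g(S)R_g\log\log x\lesssim\log x$ is therefore too strong, and the Lagrange optimization built on it bounds a smaller quantity than the actual maximum (in the cyclic example it would ``prove'' an upper bound contradicted by the lower bound above). Moreover the constraint is type-dependent, which is incoherent when you sum over types: a single $\alpha$ pays for its prime divisors once, and those primes simultaneously feed irreducible divisors of all maximal types. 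The paper's argument fixes both issues at once: reduce to a near-extremal $\mfa$ whose primes in each class form an initial segment, record only the parameters $u_i$ (largest norm in class $\cC_i$ divided by $\log x$), derive the single constraint $\sum_i u_i\le h+o(1)$ from the norm bound, and then bound $\nu(\mfa)$ by $(P(u_1,\dots,u_h)+o(1))\bigl(\frac{\log x}{h\log\log x}\bigr)^D\le (M+o(1))\bigl(\frac{\log x}{h\log\log x}\bigr)^D$, discarding non-squarefree and non-maximal-type divisors as $O((\log x/\log\log x)^{D-1})$. Your peripheral points (Landau/Chebotarev input, negligibility of short types and huge primes, making the ideal principal) are fine or easily fixable, but the optimization model determining the constant must be replaced by the simplex maximization of $P$.
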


We turn now to our second theme, the distribution of irreducibles in arithmetic progressions. Let $\Pi(x)$ denote the count of nonassociate irreducibles $\pi \in \Z_K$ with $|N(\pi)|\le x$. So when $K=\Q$, we have $\Pi(x) = \pi(x)$, the familiar rational prime counting function. It was shown by R\'emond in 1966 \cite[Chapter 2]{remond66} that for any number field $K$,
\[ \Pi(x) = (C+o(1)) \frac{x}{\log{x}} (\log\log{x})^{D-1} \]
for a constant $C >0$ depending only on the class group of $K$. (We continue to use $D$ for the Davenport constant of the class group.) When the class group is trivial, $C=D=1$, and so this result contains the classical prime number theorem.

Our second main result is a corresponding generalization of the prime number theorem for arithmetic progressions. For $\mfm$ a nonzero ideal of $\Z_K$ and $\alpha$ a nonzero element of $\Z_K$, we let  $\Pi(x;\mfm,\alpha)$ denote the following counting function of irreducibles:
\[ \Pi(x;\mfm,\alpha) =\#\{\text{principal ideals }(\pi): |N\pi| \le x, \pi\text{ irred.}, \pi \equiv \alpha\pmod{\mfm},\text{ and } \frac{\pi}{\alpha} \gg 0\}. \]
Here the notation ``$\gg 0$''  indicates total positivity. (There will be no danger of confusion with Vinogradov's notation for orders of magnitude.) It is clear that if $(\alpha)$ and $\mfm$ have a common principal ideal divisor in $\Z_K$ other than the unit ideal, then $\Pi(x;\mfm,\alpha)\le 1$ for all $x$. If there is no such principal ideal, we will say that $\alpha$ and $\mfm$ are \emph{weakly relatively prime}. (We reserve the term \emph{relatively prime} for the case when $\alpha$ and $\mfm$ have no nonunit ideal divisors at all in common, or equivalently, for when $\alpha$ and $\mfm$ are comaximal.)

\begin{theorem}\label{thm:2ndmain} Let $\mfm$ be a nonzero ideal of $\Z_K$ and let $\alpha$ be a nonzero element of $\Z_K$ with $\alpha$ and $\mfm$ weakly relatively prime. Then there is a positive constant $C'$ and a positive integer $L$ such that, as $x\to\infty$,
\[ \Pi(x;\mfm,\alpha) = (C'+o(1)) \frac{x}{\log{x}} (\log\log{x})^{L-1}. \]
The constants $C'$ and $L$ will be specified in the course of the proof.
\end{theorem}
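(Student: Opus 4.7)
The plan is to extend R\'emond's argument for $\Pi(x)$ by carrying along a ray class constraint on the principal ideal $(\pi)$; the main new analytic input is Landau's prime ideal theorem in the narrow ray class group $\Cl_{\mfm\infty}$ of $K$ with modulus $\mfm$ together with all real infinite places. I first reformulate the congruence and positivity conditions on $\pi$ as an ideal-theoretic condition. Split into cases according to whether $(\alpha)$ and $\mfm$ are ideal-theoretically coprime: in Case~(i), when $(\alpha)$ and $\mfm$ are coprime, existence of a generator of $(\pi)$ satisfying $\pi\equiv\alpha\pmod{\mfm}$ and $\pi/\alpha\gg 0$ is equivalent to $(\pi)$ lying in the specific ray class $R_\alpha := [(\alpha)]\in\Cl_{\mfm\infty}$. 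In Case~(ii), when $\mfa := \gcd((\alpha),\mfm)$ is nontrivial (hence non-principal by weak coprimality), the congruence forces $v_\mfp(\pi) = v_\mfp(\alpha)$ at each $\mfp\mid\mfm$ with $v_\mfp(\alpha) < v_\mfp(\mfm)$ and $v_\mfp(\pi)\ge v_\mfp(\mfm)$ otherwise; a fixed ``core'' ideal $\mfa'$ therefore divides $(\pi)$ in a prescribed way, and factoring it out reduces the problem to counting cofactors of specified class lying in a specified ray class.

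Next I parametrize irreducibles by type. Recall that $\pi\in\Z_K$ is irreducible iff the multiset $\tau := ([\mfp_1],\ldots,[\mfp_k])$ arising from $(\pi) = \mfp_1\cdots\mfp_k$ is a minimal zero-sum sequence in $\Cl(K)$. Group the irreducibles by $\tau$, letting $\cT$ denote the set of admissible types; in Case~(ii), admissibility requires that $\tau$ contain a rigid core $\tau_0$ determined by the prime factors of $\mfa'$. For an admissible $\tau$ whose ``free'' part has length $\ell$, I use orthogonality across $\widehat{\Cl_{\mfm\infty}}$ to detect the ray class constraint and across $\widehat{\Cl(K)}$ to pin the ideal class of each free prime. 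The count $N_\tau(x)$ of irreducibles of type $\tau$ with $|N\pi|\le x$ in ray class $R_\alpha$ then reduces to a linear combination, indexed by ray class characters $\chi_1,\ldots,\chi_\ell$, of multiple prime sums of the form $\sum_{\mfp_1,\ldots,\mfp_\ell}\prod_{j=1}^\ell \chi_j(\mfp_j)\,\mathbf{1}[\prod_j N\mfp_j \le x/N\mfa']$. Iterating Landau's prime ideal theorem for Hecke characters (equivalently, exploiting the analytic properties of Hecke $L$-functions near $s = 1$), only the trivial characters contribute at main order, yielding
\[
N_\tau(x) \sim C_\tau\,\frac{x(\log\log x)^{\ell - 1}}{\log x}
\]
for an explicit positive constant $C_\tau$.

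Finally, let $L := \max_{\tau\in\cT}\ell(\tau)$ and $C' := \sum_{\tau\in\cT,\,\ell(\tau) = L}C_\tau$; summing the asymptotics above over such $\tau$ yields the claim. In Case~(i) the core $\tau_0$ is empty, every minimal zero-sum sequence of length $D$ is admissible, and $R_\alpha$ is realizable as a product of primes in prescribed classes because $R_\alpha\in\ker(\Cl_{\mfm\infty}\to\Cl(K))$: the equidistribution of primes within each fibre of this surjection shows that every kernel element is so attainable as soon as at least one free prime is present. Hence $L = D$ in Case~(i). In Case~(ii), $L$ equals the maximum length of a free extension of $\tau_0$ to a minimal zero-sum sequence in $\Cl(K)$.

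The principal obstacle is the analytic estimation of $N_\tau(x)$: although the scheme amounts to iterating Landau's prime ideal theorem with Hecke characters, the combinatorial bookkeeping for repeated primes within a single type $\tau$ (to avoid overcounting generators of $(\pi)$), the precise extraction of each constant $C_\tau$ from residues of Hecke $L$-functions at $s = 1$, and uniformity of error terms across $\cT$ all demand careful treatment. A secondary subtlety is the combinatorial identification of $L$ in Case~(ii), where one must characterize the longest minimal zero-sum extensions of the rigid core $\tau_0$ within $\Cl(K)$ and confirm that $\Pi(x;\mfm,\alpha)$ is unbounded precisely when some extension exists with positive free length.
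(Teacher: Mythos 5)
Your proposal is correct in outline and follows essentially the same route as the paper: reduce the congruence and total-positivity conditions to a strict ray class condition on the cofactor $(\pi)\mfg^{-1}$, where $\mfg=(\alpha,\mfm)$; stratify irreducibles by type (minimal zero-sum sequence in $\Cl(K)$) containing the core type of $\mfg$; take $L$ to be the maximal free length; and extract the per-type asymptotic from Landau's equidistribution theorem. Your identification of $L$, including $L=D$ in the coprime case, matches Theorem \ref{irredprog}. The one real difference is the execution of the per-type count: the paper never expands in Hecke characters; it first shows (Lemma \ref{bigprime}) that ideals with $L$ prime factors all of norm at most $X^{1-1/\log\log X}$ are negligible, then fixes the $L-1$ smaller primes and counts the single large prime by Landau's theorem for strict ray classes modulo $\mfm\mfg^{-1}$, the factor $(\log\log x)^{L-1}$ coming from Mertens-type sums $\sum_{\mfp\in\cC,\,N\mfp\le x}1/N\mfp=\frac{1}{h}\log\log x+O(1)$. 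Your orthogonality-plus-Hecke-$L$-function scheme can be made to work, but to justify that nontrivial characters are negligible in the norm-constrained multiple sums you would end up needing essentially the same largest-prime-factor device (or a Selberg--Delange analysis), so it buys little over quoting Landau directly. Two points you leave as assertions are where the paper does real work: first, the reduction must be an equivalence --- for $\mfg\mid\mfa$, the ideal $\mfa$ has a generator $\rho\equiv\alpha\pmod{\mfm}$ with $\rho/\alpha\gg 0$ if and only if $\mfa\mfg^{-1}$ and $(\alpha)\mfg^{-1}$ lie in the same strict ray class modulo $\mfm\mfg^{-1}$ (Lemma \ref{rayclass}); the converse direction, which recovers the congruence at the primes dividing $\mfg$, requires a short valuation argument you do not indicate. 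Second, the relevant consequence of weak coprimality is not merely that $\mfg$ is non-principal but that its type has no nonzero principal subtype; that is exactly what guarantees irreducible types containing the core with positive free length, and with it the consistency argument you give in Case (i) (the forced ray class of the last prime lies over the ideal class demanded by the type) applies verbatim in Case (ii), so the realizability question you defer is not a separate difficulty.
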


It is obvious by comparison with R\'emond's theorem that $L \le D$, for all choices of $\alpha$ and $\mfm$. It will emerge from the proof that (for $K$ fixed) the constants $C'$ and $L$ depend only on the gcd ideal $(\alpha,\mfm)$, with $L=D$ precisely when $(\alpha,\mfm)$ is the unit ideal. Consequently, 100\% of irreducibles are (strongly) relatively prime to $\mfm$, and those are asymptotically uniformly distributed among the strict ray classes modulo $\mfm$ represented by principal ideals prime to $\mfm$.

\section{Background on the equidistribution of prime ideals in ray classes}
In both of our main results, the key analytic input is a 1918 theorem of Landau \cite{lan18} on the equidistribution of prime ideals in strict ray classes. We recall the setup here. For detailed proofs of the basic facts used about ray class groups, see the recent book of Childress \cite[Chapter 3]{childress09}.

For a number field $K$, we let $\Id(K)$ and $\PrinFrac(K)$ denote the group of all fractional ideals of $K$ and all principal fractional ideals of $K$, respectively. For each nonzero integral ideal $\mfm$ of $K$, let
\[ \Id_{\mfm}(K) = \{\text{fractional ideals }\mfa: \mathrm{ord}_\mfp(\mfa) = 0\text{ for all prime ideals $\mfp \mid \mfm$}\},\]
and let
\[ \PrinFrac_{\mfm}^{+}(K) = \{\gamma\Z_K: \gamma \in K, \gamma \equiv 1~\mathrm{mod}^{+}~{\mfm}\}; \]
here the $\mathrm{mod}^{+}$ notation means that $\mathrm{ord}_\mfp(\gamma - 1) \ge \mathrm{ord}_\mfp(\mfm)$ for all prime ideals $\mfp \mid \mfm$ and that $\gamma \gg 0$. The strict ray class group of $K$ modulo $\mfm$ is defined by
\[ \Cl_{\mfm}(K) := \Id_{\mfm}(K)/\PrinFrac_{\mfm}^{+}(K). \] The order of $\Cl_{\mfm}(K)$, referred to as the \emph{strict ray class number mod $\mfm$}, is denoted $h_{K,\mfm}$. We view the partition of ideals prime to $\mfm$ into strict ray classes as analogous to the partition of rational integers coprime to $m$ into residue classes mod $m$. (In fact, if $K=\Q$ and $\mfm=(m)$, then $\Cl_{\mfm}(K) \cong (\Z/m\Z)^{\times}$, via the map $[ab^{-1}\Z_K] \mapsto ab^{-1}\bmod{m}$.)

For each strict ray class $\cC \in \Cl_{\mfm}(K)$, we define the prime ideal counting function
\[ \pi_K(x;\cC) := \sum_{\substack{\mfp:~N(\mfp) \le x \\ \mfp \in \cC}} 1. \]

\begin{theorem}[Landau's equidistribution theorem]\label{thm:landau} Fix a $\cC \in \Cl_{\mfm}(K)$. For all $x\ge 3$,
\[ \pi_K(x;\cC) = \frac{1}{h_{K,\mfm}} \int_{2}^{x} \frac{dt}{\log{t}} + O_K(x \exp(-c_K \sqrt{\log x})). \]
Here $c_K$ is a positive constant depending only on $K$.
\end{theorem}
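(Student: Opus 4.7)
The plan is to derive Theorem~\ref{thm:landau} by the classical Hecke $L$-function method, essentially as in Landau's original 1918 treatment. For each character $\chi$ of the finite group $\Cl_{\mfm}(K)$, pull $\chi$ back to $\Id_{\mfm}(K)$ and extend by zero to ideals sharing a factor with $\mfm$; the associated Hecke $L$-function
\[ L(s,\chi) = \sum_{\mfa} \chi(\mfa)\, N(\mfa)^{-s} = \prod_{\mfp \nmid \mfm} \bigl(1 - \chi(\mfp) N(\mfp)^{-s}\bigr)^{-1} \]
converges absolutely for $\Re(s) > 1$. By orthogonality of characters,
\[ \sum_{\substack{N\mfp \le x \\ \mfp \in \cC}} \log N\mfp = \frac{1}{h_{K,\mfm}} \sum_{\chi} \overline{\chi(\cC)} \sum_{N\mfp \le x} \chi(\mfp) \log N\mfp, \]
reducing the problem to an asymptotic estimate for each twisted prime-ideal sum on the right.

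Next I would invoke Hecke's fundamental analytic theorems (provable via theta-series identities, or Tate's adelic formalism): each $L(s,\chi)$ admits a meromorphic continuation to $\mathbb{C}$ with a functional equation of the expected shape. For the principal character $\chi_0$, $L(s,\chi_0)$ agrees with $\zeta_K(s)$ up to finitely many Euler factors at primes dividing $\mfm$ and so inherits the simple pole at $s = 1$; for every nonprincipal $\chi$, $L(s,\chi)$ is entire. The classical nonnegativity inequality $3 + 4\cos\theta + \cos 2\theta \ge 0$, applied to suitable products involving $\zeta_K(\sigma)^3$, $|L(\sigma+it,\chi)|^4$, and $|L(\sigma+2it,\chi^2)|$, then yields the de~la~Vall\'ee Poussin zero-free region $\sigma \ge 1 - c/\log(|t|+3)$ for every $L(s,\chi)$, with at most one possible real exceptional (Siegel) zero.

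With these analytic facts in hand, a truncated Perron formula applied to $-L'(s,\chi)/L(s,\chi)$, followed by a contour shift past the zero-free region to $\sigma = 1 - c/\log T$ and optimization in $T$, extracts only the residue at $s=1$ coming from $\chi = \chi_0$ and produces
\[ \sum_{N\mfp \le x} \chi(\mfp)\log N\mfp = \delta_{\chi,\chi_0}\cdot x + O_K\bigl(x \exp(-c_K'\sqrt{\log x})\bigr). \]
Partial summation removes the $\log N\mfp$ weighting, and recombining through the character expansion gives the asymptotic claimed. The principal technical obstacle is the zero-free region: the cosine trick handles all but a possible real Siegel zero, and controlling its contribution requires invoking Siegel's theorem on real $L$-functions, which is ineffective but allows absorbing the residual dependence on the conductors of the $\chi$ and on $h_{K,\mfm}$ into the constant $c_K$.
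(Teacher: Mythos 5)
The paper does not actually prove Theorem~\ref{thm:landau}: it is imported as a known classical input, cited to Landau's 1918 paper \cite{lan18}, with \cite{childress09} referenced for the ray-class background. Your sketch is the standard proof of that result --- orthogonality of characters of $\Cl_{\mfm}(K)$, the associated Weber/Hecke $L$-functions with their continuation and functional equation, a de la Vall\'ee Poussin zero-free region via the $3+4\cos\theta+\cos 2\theta$ inequality, a truncated Perron formula with a contour shift, and partial summation to pass from the weighted to the unweighted count --- and as an outline it is sound; it is essentially Landau's own route, so there is no methodological divergence from the source to compare.

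One substantive correction: the role you assign to Siegel's theorem is misplaced. In the theorem as stated (and in every application in the paper) the modulus $\mfm$ is fixed, so only finitely many $L$-functions are in play; each real character contributes at most one real zero $\beta_0<1$, and its contribution to the explicit formula is $O(x^{\beta_0})$, which for fixed $\beta_0<1$ is already $o(x\exp(-c\sqrt{\log x}))$ for every $c>0$. So no ineffective input is needed, and the constants are effective (the phrase ``depending only on $K$'' is to be read with $\mfm$ regarded as part of the fixed data). Conversely, if you wanted genuine uniformity in $\mfm$, Siegel's theorem would not rescue the stated error term: it controls the exceptional zero only in a Siegel--Walfisz range, with the modulus at most a fixed power of $\log x$, so invoking it to ``absorb the dependence on the conductors'' claims more than it delivers. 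With that adjustment --- treat $\mfm$ as fixed and absorb the finitely many exceptional-zero terms directly into the error --- your outline is correct.
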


The following remarks will be useful in our applications. Recall that the (ordinary) class group of $K$, here denoted $\Cl(K)$, is defined as $\Id(K)/\PrinFrac(K)$. The inclusion $\Id_{\mfm}(K) \hookrightarrow \Id(K)$ induces an isomorphism
\[ \Cl(K) = \Id(K)/\PrinFrac(K) \cong \Id_{\mfm}(K)/\PrinFrac_{\mfm}(K), \]
where $\PrinFrac_{\mfm}(K) = \Id_{\mfm}(K) \cap \PrinFrac(K)$. In particular, letting $h_K := \#\Cl(K)$ (the ordinary class number),
\begin{align*} h_K = [\Id_{\mfm}(K): \PrinFrac_{\mfm}(K)] &= \frac{[\Id_{\mfm}(K): \PrinFrac_{\mfm}^{+}(K)]}{[\PrinFrac_{\mfm}(K): \PrinFrac_{\mfm}^{+}(K)]} \\ &= \frac{h_{K,\mfm}}{[\PrinFrac_{\mfm}(K): \PrinFrac_{\mfm}^{+}(K)]}. \end{align*}
Rearranging,
\begin{equation}\label{eq:isunion} [\PrinFrac_{\mfm}(K): \PrinFrac_{\mfm}^{+}(K)] = \frac{h_{K,\mfm}}{h_{K}}. \end{equation}
Thus, the ratio $h_{K,\mfm}/h_{K}$ can be interpreted as the number of strict ray classes modulo $\mfm$ represented by principal ideals prime to $\mfm$. Motivated by the analogy with Euler's totient function, we set
\[ \Phi(\mfm) := \frac{h_{K,\mfm}}{h_K}. \]

Various earlier algebro-analytic results can be recovered as special cases of Theorem \ref{thm:landau}. For instance, we easily deduce the equidistribution of prime ideals relative to the ordinary class group $\Cl(K)$; in that case, the factor $\frac{1}{h_{K,\mfm}}$ in Theorem \ref{thm:landau} should be  replaced with $\frac{1}{h_K}$. (To see this implication, take $\mfm=(1)$ in Theorem \ref{thm:landau} and note that, by \eqref{eq:isunion}, each  ideal class modulo $(1)$ is a union of $h_{K,(1)}/h_K$ strict ray classes mod $(1)$.) Theorem \ref{thm:landau} also implies the \emph{prime ideal theorem}, that the total number of prime ideals of norm not exceeding $x$ is asymptotically $\int_{2}^{x} dt/\log{t}$. (Sum over all $h_{K,\mfm}$ strict ray classes.) Since these earlier theorems are also due to Landau, we shall refer to any and all of these results as ``Landau's theorem''.

% \subsection*{Notation} For an ideal $A$, the function $\omega(A)$ returns the number of distinct prime ideal divisors of $A$; for an element $\alpha \in \bZ_K$, $\omega(\alpha)$ is the number of distinct prime ideal divisors of $(\alpha)$. The letters $\mfp$ and $\mfq$ are reserved for prime ideals. Other notation will be introduced as necessary.

\section{Anatomy of an irreducible}\label{sec:anatomy}
Since ideals of $\Z_K$ factor uniquely while elements typically do not, we will rephrase our questions about irreducible elements in ideal-theoretic terms. To make this translation, it is important to understand how irreducible elements decompose as products of prime ideals. We recall the basic results here (cf. \cite[\S9.1]{narkiewicz04}).

Fix --- once and for all --- an ordering of the (ordinary) ideal classes, say $\cC_1, \dots, \cC_h$, where $h=h_K$.

We define a \emph{type} as an $h$-tuple of nonnegative integers. Given a nonzero integral ideal $\mfa$ of $\Z_K$, the \emph{type of $\mfa$} is the tuple $(t_1,\dots,t_h)$, where $t_i$ is the number of prime ideals dividing $\mfa$ from the class $\cC_i$, counted with multiplicity. The type of a nonzero element of $\Z_K$ is defined as the type of the corresponding principal ideal.

Now let $\pi$ be an irreducible element of $\Z_K$. Irreducibility implies that the prime ideal factorization of $(\pi)$ has no nonempty proper subproduct equal to a principal ideal. Thus, if $\tau=(t_1,\dots,t_h)$ is the type of an irreducible, then $\tau$ has the property that $\cC_1^{t_1} \cdots \cC_h^{t_h}$ is trivial in $\Cl(K)$ but no proper nonempty subproduct is trivial. If $\tau=(t_1,\dots,t_h)$ is any type with this property and not all $t_i=0$, we call $\tau$ an \emph{irreducible type}.

Landau's theorem implies that every irreducible type $(t_1,\dots,t_h)$ is the type of an irreducible element. Indeed, if we multiply $t_i$ prime ideals from the class $\cC_i$ (for $i=1,2,\dots,h$), the result is a principal ideal, and each of its generators is an irreducible of the sought-after kind. (Landau's theorem is used here only to ensure the existence of at least one prime ideal in each ideal class.) Thus, the irreducible types are exactly the types of irreducibles.

Recall our notation $D$ for the Davenport constant of $\Cl(K)$.
It follows quickly from the definition of the Davenport constant that if $(t_1,\dots,t_h)$ is any irreducible type, then $t_1+\dots+t_h\le D$ (so that, in particular, there are only finitely many irreducible types) and that equality holds for some irreducible type. The quantity $t_1+\dots+t_h$ will be referred to as the \emph{length} of $\tau=(t_1,\dots,t_h)$. We call an irreducible type with length $D$ a \emph{maximal irreducible type}.

\section{The maximal order of $\nu$: Proof of Theorem \ref{thm:1stmain}}
Let $\cT_{\max}$ denote the collection of maximal irreducible types, and consider the polynomial in $x_1,\dots,x_h$ defined by
\[
P(x_1, \ldots, x_h) = \sum_{\tau \in \cT_{\max}} \prod_{i = 1}^h \frac{x_i^{t_i}}{t_i!},
\]
where we have written each $\tau$ as $(t_1,\dots,t_h)$. Note that $P$ depends on $K$ only via its class group. Let $M$ denote the maximum value of $P$ on the simplex
\[
\Delta = \{(x_1, \ldots, x_h)\in \R^h: x_i \geq 0 \,\,\, \forall i, \,\,\, \sum x_i \leq h\}.
\]
The value $M$ exists, as the maximum of a polynomial on a compact set, and it is obvious that $M > 0$. We will show that Theorem \ref{thm:1stmain} holds with this value of $M$.

The proof goes in two stages. First, we show the lower bound implicit in Theorem \ref{thm:1stmain}.
% Fix a number field $K$, and define an arithmetic function $\nu$ on $\bZ_K$ such that $\nu(\alpha)$ is the number of nonassociate irreducible elements of $\bZ_K$ dividing $\alpha$. The function $\nu$ is somewhat analogous to the number-of-distinct-prime-factors function $\omega(n)$, for $n \in \bZ$, but the analogy is not perfect: If $K = \bQ(\sqrt{-5}),$ then $6 \in \bZ_K$ factors in two ways, as $6 = 2 \cdot 3$ and $6 = (1 + \sqrt{-5})(1 - \sqrt{-5})$. Hence, $\nu(6) = 4$, while $\nu(2) = \nu(3) = 1$. Nevertheless, recent work of Pollack \cite{poleek} establishes an Erd{\H o}s-Kac theorem for $\nu(\alpha)$.

% Presently, we prove a maximal order result for $\nu(\alpha)$.

\begin{lemma}{\label{lowerbound}} As $x\to\infty$,
\begin{align*}
\max_{\alpha:~0 < |N(\alpha)| \le x} \nu(\alpha) \geq (M-o(1)) \Big( \frac{\log x}{h \log\log x} \Big)^D.
\end{align*}
\end{lemma}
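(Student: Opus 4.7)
The plan is to produce, for each sufficiently large $x$, an explicit element $\alpha \in \Z_K$ with $0 < |N(\alpha)| \le x$ and $\nu(\alpha) \ge (M-o(1)) Y^D$, where $Y := \log x / (h \log\log x)$.

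First, I would fix a point $(x_1^*,\ldots,x_h^*) \in \Delta$ at which $P$ attains its maximum $M$. Since $P$ has nonnegative coefficients and is therefore coordinate-wise nondecreasing on $\R_{\ge 0}^h$, I may assume $\sum_i x_i^* = h$. I would then set $s_i := h \lfloor (1-\eta) x_i^* Y / h \rfloor$ for an $\eta = \eta(x) \to 0^+$ to be chosen later, so that each $s_i$ is a multiple of $h$ with $s_i = (1-\eta) x_i^* Y + O(1)$. Because $\cC^h$ is trivial for every ordinary ideal class $\cC$ (Lagrange), the product class $\prod_i \cC_i^{s_i}$ is trivial. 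Letting $T_i$ denote the set of the $s_i$ primes of least norm in $\cC_i$---existence of sufficiently many such primes being ensured by Landau's theorem for ordinary classes---the ideal $\mfa := \prod_i \prod_{\mfp \in T_i} \mfp$ is then principal, and I would take $\alpha$ to be any generator.

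Counting the nonassociate irreducible divisors of $\alpha$ is then straightforward. For each $\tau = (t_1,\ldots,t_h) \in \cT_{\max}$ and each choice of $t_i$ primes from $T_i$ (for $i = 1,\ldots,h$), the corresponding subproduct of $\mfa$ is a principal ideal (since $\prod_i \cC_i^{t_i}$ is trivial for any irreducible type), whose generator is irreducible (the subproduct-freeness built into the definition of an irreducible type forbids any nontrivial factorization). Distinct subset choices give distinct ideals and hence nonassociate irreducibles, and different maximal types yield disjoint collections. Summing,
\[
\nu(\alpha) \ge \sum_{\tau \in \cT_{\max}} \prod_{i=1}^h \binom{s_i}{t_i} = (1+o(1)) P(s_1,\ldots,s_h) = (1-\eta)^D M Y^D\,(1+o(1)),
\]
where the first equality uses the binomial approximation $\binom{s_i}{t_i} \sim s_i^{t_i}/t_i!$ (valid since each $t_i$ is bounded and $s_i \to \infty$) and the second uses the degree-$D$ homogeneity of $P$ together with $s_i = (1-\eta)x_i^* Y + O(1)$.

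The one delicate step---and the place I expect the main obstacle to lie---is verifying $|N(\alpha)| \le x$. By Landau's theorem for ordinary classes, the $k$th smallest prime in $\cC_i$ has norm $\sim h k \log k$, so
\[
\log |N(\alpha)| = \sum_i \sum_{k=1}^{s_i} \log N(\mfp_{i,k}) = (1+o(1)) \sum_i s_i \log s_i = (1-\eta)(1+o(1))\, h Y \log Y = (1-\eta)(1+o(1)) \log x,
\]
using $hY \log Y \sim \log x$ and $\sum_i x_i^* = h$. I would then choose $\eta \to 0^+$ slowly enough that $(1-\eta)(1+o(1)) \le 1$ for all large $x$; this secures $|N(\alpha)| \le x$ while leaving the lower bound on $\nu(\alpha)$ asymptotically $(1-o(1)) M Y^D$, as required. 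Tracking the error term in Landau's theorem (both when bounding the number of primes in each class from below and when estimating $N(\mfp_{i,k})$) is the main technical chore in executing this plan, but no new ideas are needed.
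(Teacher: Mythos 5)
Your proposal is correct and follows essentially the same route as the paper: build an ideal out of the smallest prime ideals in each class, with multiplicities proportional to a maximizer of $P$ on $\Delta$, count its maximal-type irreducible divisors via products of binomial coefficients, and control the norm through Landau's theorem. The only cosmetic differences are that you force the ideal itself to be principal by rounding the prime counts to multiples of $h$ (the paper instead passes to a principal multiple of a non-principal $\mfa$ via the smallest ideal in the inverse class) and that you take $\eta=\eta(x)\to 0$ slowly rather than fixing a small parameter and letting it tend to $0$ at the end, which amounts to the same thing once the error terms are checked to be uniform.
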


For the upper bound, it is convenient to extend the definition of $\nu$ as follows. For each nonzero ideal $\mfa$ of $\Z_K$, we let $\nu(\mfa)$ denote the number of nonassociate irreducibles $\pi$ for which $(\pi)$ divides $\mfa$. Thus, if $\mfa = (\alpha)$, then $\nu(\mfa) = \nu(\alpha)$.

\begin{lemma}\label{boundmatch} For each nonzero integral ideal $\mfa$ of $\Z_K$ with $N(\mfa) \le x$,
\[
\nu(\mfa) \le (M + o(1)) \Big( \frac{\log x}{h \log\log x} \Big)^D.
\]
\end{lemma}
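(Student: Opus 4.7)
The plan is to translate the question into an ideal-theoretic one. By the anatomy results of Section~\ref{sec:anatomy}, every ideal of irreducible type is principal with irreducible generators, and distinct irreducible generators of the same principal ideal are associate; hence
\[
\nu(\mfa) = \sum_{\tau \text{ irred.}} N_\tau(\mfa), \qquad N_\tau(\mfa) := \#\{\mathfrak{b} \mid \mfa : \text{type}(\mathfrak{b}) = \tau\}.
\]
For each class $\cC_i$, set $\omega_i(\mfa) := \#\{\mfp \mid \mfa : \mfp \in \cC_i\}$. Since an ideal divisor $\mathfrak{b} = \prod_\mfp \mfp^{f_\mfp}$ of $\mfa$ of type $\tau$ is specified by independently choosing, for each class $\cC_i$, a nonnegative tuple $(f_\mfp)_{\mfp \in \cC_i,\, \mfp \mid \mfa}$ with $f_\mfp \le e_\mfp$ and $\sum f_\mfp = t_i$, dropping the upper bounds and applying stars-and-bars gives
\[
N_\tau(\mfa) \le \prod_{i=1}^h \binom{\omega_i + t_i - 1}{t_i} \le \prod_{i=1}^h \frac{(\omega_i + D)^{t_i}}{t_i!}.
\]

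The key analytic input is Landau's prime ideal theorem, which yields the uniform bound $\omega(\mfa) = \sum_i \omega_i(\mfa) \le (1+o(1))\log x / \log\log x$ for $\mfa$ with $N(\mfa) \le x$: the smallest $\omega(\mfa)$ prime ideals of $\Z_K$ have norms whose logarithms sum to $\sim \omega(\mfa)\log\omega(\mfa)$ via the prime-ideal analog of Chebyshev's $\theta$-function. Write $k := \log x/\log\log x$, so $\sum_i (\omega_i + D) \le (1+o(1)) k$. Summing the displayed bound over irreducible types and splitting into maximal and non-maximal contributions yields
\[
\nu(\mfa) \le P(\omega_1+D,\dots,\omega_h+D) + R(\omega_1+D,\dots,\omega_h+D),
\]
where $R$ collects the non-maximal irreducible types. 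Enlarging the sum from irreducible to arbitrary types and invoking the multinomial identity $\sum_{|\tau|=\ell} \prod_i x_i^{t_i}/t_i! = (\sum_i x_i)^\ell/\ell!$ gives $R(x_1,\dots,x_h) \le \sum_{\ell=1}^{D-1} (\sum_i x_i)^\ell/\ell!$, whence $R(\omega_1+D,\dots) = O(k^{D-1}) = o((k/h)^D)$.

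For the main term, I would use the homogeneity of $P$ of degree $D$ together with the definition of $M$ as the maximum of $P$ on $\Delta$. Given $y_i \ge 0$ with $s := \sum_i y_i > 0$, the point $(hy_1/s,\dots,hy_h/s)$ lies in $\Delta$, and homogeneity gives $P(y_1,\dots,y_h) \le M(s/h)^D$. Applied with $y_i = \omega_i + D$, this produces
\[
P(\omega_1+D,\dots,\omega_h+D) \le M\Bigl(\frac{(1+o(1))k}{h}\Bigr)^D = (M+o(1))\Bigl(\frac{k}{h}\Bigr)^D,
\]
which combines with the bound on $R$ to give the asserted inequality.

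The delicate point --- and the main obstacle --- is the precise choice of bound in the stars-and-bars step. The coarser inequality $N_\tau(\mfa) \le \prod_i \binom{n_i}{t_i}$ with $n_i = \sum_{\mfp \in \cC_i} e_\mfp [\mfp \mid \mfa]$ would involve $\Omega(\mfa) = \sum_i n_i$ rather than $\omega(\mfa)$; since $\Omega(\mfa)$ can be as large as $\log_2 x$, far exceeding $\log x/\log\log x$, the scaling argument would lose a power of $\log\log x$ and collapse. Working with $\omega_i$ in place of $n_i$ --- which is exactly what stars-and-bars provides --- preserves the sharp leading constant $M$.
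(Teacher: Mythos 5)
Your argument is correct, and it takes a genuinely different route from the paper's. The paper first passes to an extremal ideal $\mfa$ (closed downward under replacing a prime divisor by a smaller-norm prime in the same class), discards non-maximal-type and non-squarefree irreducible divisors, and then parametrizes the situation by the numbers $u_i$ measuring the largest prime norm from each class $\cC_i$ dividing $\mfa$; Landau's equidistribution theorem among ideal classes is used both to show $\sum_i u_i \le h+o(1)$ (so $(u_1,\dots,u_h)$ is nearly in $\Delta$) and to convert each $\omega_i(\mfa)$ into $(1+o(1))\,u_i\log x/(h\log\log x)$, after which continuity of $P$ near $\Delta$ gives $P(u_1,\dots,u_h)\le M+o(1)$. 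You instead bound the number of divisors of $\mfa$ of each irreducible type by stars-and-bars, use only the prime ideal theorem (total counts, no equidistribution among classes) to get the sharp global bound $\omega(\mfa)\le(1+o(1))\log x/\log\log x$, and then exploit the degree-$D$ homogeneity of $P$ to rescale the point $(\omega_1+D,\dots,\omega_h+D)$ onto the face $\sum x_i=h$ of $\Delta$, yielding $P\le M(s/h)^D$ directly; non-maximal types are absorbed into an $O(k^{D-1})$ remainder via the multinomial identity. Your route is leaner: it avoids the extremal-ideal reduction, the case analysis on small $u_i$, and any per-class prime counts, at the price of hiding the structural information (where the prime divisors of a near-extremal $\mfa$ must lie) that the paper's $u_i$-parametrization makes visible and that mirrors the matching lower-bound construction. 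All the individual steps check out: the identification $\nu(\mfa)=\sum_{\tau\ \mathrm{irred}}N_\tau(\mfa)$ follows from the Section~\ref{sec:anatomy} anatomy, the bound $\binom{\omega_i+t_i-1}{t_i}\le(\omega_i+D)^{t_i}/t_i!$ is valid since $t_i\le D$, the sharp constant in the $\omega(\mfa)$ bound does follow from the prime ideal theorem exactly as in \eqref{eq:omegalimsup}, and homogeneity legitimately replaces the paper's continuity argument.
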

Restricting to principal ideals in Lemma \ref{boundmatch} yields the upper bound half of Theorem \ref{thm:1stmain}. Thus, it remains only to prove Lemmas \ref{lowerbound} and \ref{boundmatch}.

\subsection{Proof of Lemma \ref{lowerbound}.} It is enough to show that for each fixed $\epsilon> 0$ and all $x> x_0(\epsilon)$, there is a nonzero $\alpha\in\Z_K$ with $|N(\alpha)|\le x$ and
\begin{equation}\label{eq:epsversion} \nu(\alpha) \ge (M -\epsilon) \left(\frac{\log{x}}{h\log\log{x}}\right)^{D}.\end{equation}
% The value $M_K$ exists, as the maximum of a polynomial on a compact set, and $M_K \neq 0$. Note that if $t_i = 0$ for each $\tau \in \cT_{\max}$, then we can assume $\gamma_i = 0$. Let $(\gamma_1, \ldots, \gamma_h) \in S$ be a point at which $P$ achieves the value $M_K$.
%For each $1 \leq i \leq h$, let
%\[
%C_i = \{\mfp \in \cC_i : N(\mfp) \leq \gamma_i \log x\}.
%\]
%By Theorem \ref{pit}, each set $C_i$ contains
%\begin{align}\label{piiestimate}
%\pi_i(\gamma_i \log x) = \frac{1}{h} \frac{\gamma_i \log x}{\log(\gamma_i \log x)}\big(1 + o(1)\big)
%\end{align}
%ideals, provided $\gamma_i \neq 0$ (otherwise $\#C_i = 0$).
%
%
%In the next section, we show that for any nonzero ideal $I \subset \bZ_K$ with $N(I) \leq x$,
%\[
%\nu(\alpha) \leq (M + o(1)) \Big( \frac{\log x}{h \log\log x} \Big)^D.
%\]
%This establishes the theorem.
Let $\delta > 0$ be a parameter to be chosen later in terms of $\eps$, and put $X = x^{1 - \delta}$. We fix a point $(\gamma_1,\dots,\gamma_h) \in \Delta$ at which $P$ achieves its maximum. Let $\mfa$ be the (integral) ideal of $\Z_K$ defined by
\[
\mfa := \prod_{i = 1}^h \prod_{\substack{\mfp \in \cC_i \\ N(\mfp) \leq \gamma_i \log X}} \mfp.
\]
Landau's equidistribution theorem implies that each term of the inside product is of size $X^{\gamma_i/h+o(1)}$ (as $x\to\infty$); since $\sum \gamma_i\le h$,
\[
N(\mfa) \leq X^{1 + o(1)} = x^{1-\delta+o(1)}.
\]
(To estimate the product we used a form of Landau's result where prime ideals are counted with weight $\log N(\mfp)$ rather than weight $1$; this may be deduced by partial summation from Theorem \ref{thm:landau} by a standard calculation.)
This upper bound implies that, for $x$ sufficiently large, $\mfa$ has a principal multiple with norm at most $x$; indeed, it suffices to multiply $\mfa$ by the smallest ideal in $[\mfa]^{-1}$. So it is enough to show that $\nu(\mfa)$ is at least as large as the right-side of \eqref{eq:epsversion}.

We establish this bound by counting, for each maximal type $\tau$, the number of nonassociated irreducibles $\pi$ of type $\tau$ with $(\pi) \mid \mfa$. Since $\mfa$ is a product of distinct prime ideals, the number of these for a given $\tau$ is exactly
\begin{equation}\label{eq:prodformula}
\prod_{i = 1}^h \binom{\omega_i(\mfa)}{t_i},
\end{equation}
where $\omega_i(\mfa)$ is the number of distinct prime ideal divisors of $\mfa$ from the class $\cC_i$. By Landau's theorem, for each $i$ with $\gamma_i \ne 0$,
\begin{align}\notag \omega_i(\mfa) &= \sum_{\substack{\mfp \in \cC_i \\ N(\mfp) \leq \gamma_i \log X}} 1\\
 &= (\gamma_i+o(1)) \frac{\log{X}}{h\log\log{X}}.\label{eq:moreprecise}\end{align}

The terms of the product \eqref{eq:prodformula} with $t_i = 0$ are identically 1; for the others,
\begin{align*}
\binom{\omega_i(\mfa)}{t_i} &= \frac{\omega_i(\mfa) (\omega_i(\mfa)-1) \cdots (\omega_i(\mfa)-(t_i-1))}{t_i!} \\ &= \frac{\omega_i(\mfa)^{t_i}}{t_i!} + O\Big( \Big( \frac{\log X}{\log\log X} \Big)^{t_i - 1} \Big).
\end{align*}
As a consequence, the number of nonassociate irreducible divisors of $\mfa$ of type $\tau$ is
\begin{equation}\label{eq:tosumover}
\prod_{\substack{1 \leq i \leq h \\ t_i \neq 0}} \frac{\omega_i(\mfa)^{t_i}}{t_i!} + O((\log{X}/\log\log{X})^{D-1}).
\end{equation}

Suppose first that whenever $t_i \neq 0$, we have $\gamma_i \neq 0$. In that case, substituting in the estimate \eqref{eq:moreprecise} for $\omega_i(\mfa)$ yields
\begin{align}\label{numberofirreds}
\prod_{\substack{1 \leq i \leq h \\ t_i \neq 0}} \frac{\omega_i(\mfa)^{t_i}}{t_i!} &= (1 + o(1)) \prod_{\substack{1 \leq i \leq h \\ t_i \neq 0}} \frac{\gamma_i^{t_i}}{t_i!} \Big(\frac{\log X}{h \log\log X} \Big)^{t_i} \nonumber \\
% &= (1 + o(1)) \Big( \prod_{1 \leq i \leq h} \frac{\gamma_i^{t_i}}{t_i!} \Big) \Big(\frac{\log X}{h \log\log X} \Big)^{D} \nonumber \\
 &= \Big( \prod_{1 \leq i \leq h} \frac{\gamma_i^{t_i}}{t_i!} \Big) \Big(\frac{\log X}{h \log\log X} \Big)^{D} + o\Big( \Big( \frac{\log X}{h\log\log X} \Big)^D \Big).
\end{align}
If $\gamma_i = 0$ for some $i$ with $t_i \neq 0$, then $\prod_{\substack{1 \leq i \leq h,~t_i \neq 0}} \frac{\omega_i(\mfa)^{t_i}}{t_i!} = 0=\prod_{1 \leq i \leq h} \frac{\gamma_i^{t_i}}{t_i!}$, and hence \eqref{numberofirreds} remains valid.

Inserting \eqref{numberofirreds} back into \eqref{eq:tosumover} and then summing over maximal types $\tau$, we conclude that there are
\[
M \bigg(\frac{\log X}{h \log\log X} \bigg)^{D} + o\bigg( \Big( \frac{\log X}{h\log\log X} \Big)^D \bigg)
\]
nonassociate irreducible divisors of $\mfa$. This expression is
$\sim M (1 - \delta)^D \Big(\frac{\log x}{h \log\log x} \Big)^{D}$. Choosing $\delta$ sufficiently small in terms of $\eps$ gives a lower bound exceeding the right-hand side in \eqref{eq:epsversion}.

\subsection{Proof of Lemma \ref{boundmatch}.}
% We now show that
% \begin{align}\label{upperboundref}
% \nu(A) \leq (M_K + o(1)) \Big( \frac{\log x}{h\log\log x} \Big)^D
% \end{align}
% for every nonzero ideal $A \subset \bZ_K$ with $N(A) \leq x$, as $x \to \infty$. This implies the necessary upper bound for Theorem \ref{thm:1stmain}.
Fix an ideal $\mfa$ with $\nu(\mfa)$ maximal among all nonzero ideals with $N(\mfa) \leq x$. We may assume that $\mfa$ has the following property: Whenever a prime ideal $\mfp$ divides $\mfa$, every prime ideal $\mfp'$ belonging to the same ideal class of $\mfp$ with $N(\mfp') < N(\mfp)$ also divides $\mfa$. If not, then define a new ideal $\mfa'$ by replacing $\mfp$ by $\mfp'$ in the ideal factorization of $\mfa$; then $\mfa'$ has the same number of irreducible divisors as $\mfa$, and $N(\mfa') < N(\mfa) \leq x$. We can repeat this procedure as necessary until we obtain an ideal with the desired property.

We may restrict ourselves to counting irreducibles $\pi$ dividing $\mfa$ which are (a) of maximal type and (b) have $(\pi)$ squarefree. Indeed, if either (a) or (b) fails, then $(\pi)$ is composed of at most $D-1$ distinct prime ideals. The number of distinct prime ideals dividing $\mfa$ is $\ll \log{x}/\log\log{x}$ (by a proof entirely analogous to that of \eqref{eq:omegalimsup}). Hence, the number of possibilities for the set of prime ideals dividing $(\pi)$ is $\ll (\log{x}/\log\log{x})^{D-1}$. Furthermore, having chosen the set of prime ideals dividing $(\pi)$, the number of $(\pi)$ composed of those prime ideals is $O(1)$; one can see this by noting, for instance, that the exponent to which each prime ideal can appear is bounded (e.g., by $h$). Thus, there are $\ll (\log x/\log\log{x})^{D-1}$ irreducible divisors of $\mfa$ not satisfying (a) and (b), and this will be negligible for us.

For each $1 \leq i \leq h$, define $u_i$ so that the largest prime ideal dividing $\mfa$ and belonging to the ideal class $\cC_i$ has norm $u_i \log{x}$; if no such prime ideal exists, set $u_i = 0$. Set
\[
\mfa_0 := \prod_{\substack{1 \leq i \leq h \\ u_i \neq 0}} \prod_{\substack{\mfp \in \cC_i \\ N(\mfp) < u_i \log{x}}} \mfp.
\]
Notice that $\mfa_0 \mid \mfa$, and so $$ N(\mfa_0) \le N(\mfa) \le x.$$ If $u_i > 1/\log\log x$, then we may deduce from Landau's theorem that
\[
\prod_{\substack{\mfp \in \cC_i \\ N(\mfp) < u_i \log(x)}} N(\mfp) \geq x^{u_i(1 + o(1))/h}.
\]
Hence,
\begin{equation}\label{eq:smalluisum}
\sum_{i:~u_i > 1/\log\log x} u_i \leq h + o(1).
\end{equation}

Proceeding as in the lower bound argument, we see that the count of nonassociated irreducibles $\pi$ dividing $\mfa$ with $(\pi)$ squarefree and $\pi$ having a given maximal type $\tau=(t_1,\dots,t_h)$ is
\begin{align*}
\prod_{i = 1}^h \binom{\omega_i(\mfa)}{t_i} &= \prod_{\substack{1 \leq i \leq h \\ t_i \neq 0}} \Big( \frac{\omega_i(\mfa)^{t_i}}{t_i!} + O\Big( \Big( \frac{\log x}{\log\log x} \Big)^{t_i - 1} \Big) \Big) \\
 &= \prod_{\substack{1 \leq i \leq h \\ t_i \neq 0}} \frac{\omega_i(\mfa)^{t_i}}{t_i!} + O\Big( \Big( \frac{\log x}{\log\log x} \Big)^{D - 1} \Big).
\end{align*}
To control the error terms, we used here that for each $i$,
\begin{align}\label{trivialomegabound}
\omega_i(\mfa) = O(\log x/\log\log x),
\end{align}
which follows from having each $u_i \leq 2h$, say (which in turn follows, for large $x$, from \eqref{eq:smalluisum}). Suppose first that $u_i > 1/\log\log x$ for each index $i$ with $t_i \neq 0$. Then for each $i$ with $t_i \ne 0$, Landau's theorem shows that $\omega_i(\mfa) =(1+o(1)) \frac{u_i\log{x}}{h\log\log{x}}$, and so
\[
\prod_{\substack{1 \leq i \leq h \\ t_i \neq 0}} \frac{\omega_i(\mfa)^{t_i}}{t_i!} = \prod_{i = 1}^h \frac{u_i^{t_i}}{t_i!} \Big( \frac{\log x}{h \log\log x} \Big)^D + o\Big( \Big( \frac{\log x}{h \log\log x} \Big)^D \Big).
\]
Suppose now that some $u_i \leq 1/\log\log x$. Then for this index $i$, we have $u_i\log{x} \le \log{x}/\log\log{x}$, so that (by Landau's theorem again)
\[
\omega_i(\mfa) = O\Big( \frac{\log x}{(\log\log x)^2} \Big).
\]
It follows that
\[
\prod_{\substack{1 \leq i \leq h \\ t_i \neq 0}} \frac{\omega_i(\mfa)^{t_i}}{t_i!} \ll \left(\frac{\log{x}}{\log\log{x}}\right)^{D} (\log\log{x})^{-1},
\]
using the bound (\ref{trivialomegabound}) for the other choices of $i$. In all cases, our count of $(\pi)$ is at most
\[
\prod_{i = 1}^h \frac{u_i^{t_i}}{t_i!} \Big( \frac{\log x}{h \log\log x} \Big)^D + o\Big( \Big( \frac{\log x}{h \log\log x} \Big)^D \Big).
\]
Summing over the maximal types $\tau$, we conclude that
\begin{align}\label{Pupperbound}
\nu(\mfa) \le (P(u_1, \ldots, u_h) + o(1))\Big( \frac{\log x}{h \log\log x} \Big)^D.
\end{align}

Since
\[
\sum_{i = 1}^h u_i = \sum_{i:~u_i \leq 1/\log\log x} u_i + \sum_{i:~u_i > 1/\log\log x} u_i \leq h + o(1),
\]
the point $(u_1, \ldots, u_h)$ gets arbitrarily close to a point of $\Delta$ as $x\to\infty$. By the definition of $M$ and the uniform continuity of $P$ (on a closed set slightly larger than $\Delta$), we have that
\[ P(u_1, \ldots, u_h) \leq M + o(1).\] Inserting this into (\ref{Pupperbound}) completes the proof.

\begin{example}[analysis of the main term in Theorem \ref{thm:1stmain} when $\Cl(K)$ is cyclic] Suppose that $\Cl(K)$ is cyclic of order $h$. Number the ideal classes as $\cC_1, \dots, \cC_h$, where $\cC_i$ corresponds to $i\bmod{h}$ under a fixed isomorphism of   $\Cl(K)$ with $\Z/h\Z$. It is known that $D(\Cl(K))=h$ and that the maximal types are $(0,\dots,0,h,0,\dots,0)$, where the $h$ may appear in any of the $\phi(h)$ positions $1 \le i \le h$ with $\gcd(i,h)=1$ (see \cite[Corollary 2.1.4, p. 24]{GR09}). Thus,
\[ P(x_1,\dots,x_h) = \sum_{\substack{1 \le i \le h \\ \gcd(i,h)=1}} x_i^h/h!. \]
It is easily seen that the maximum of $P$ on $\Delta$ occurs when $x_1=h$ and all other $x_i$ vanish, so that
\[ M = h^h/h!. \]
We conclude that
\[ \max_{\alpha:~0<|N(\alpha)|\le x} \nu(\alpha) = (1+o(1)) \cdot \frac{1}{h!} \Big( \frac{\log x}{\log\log x} \Big)^h, \]
as $x\to\infty$.
\end{example}

\section{Irreducibles in arithmetic progressions: Proof of Theorem \ref{thm:2ndmain}}

Using the notation of Theorem \ref{thm:2ndmain}, let $\mfg := (\alpha, \mfm)$ be the gcd ideal of $(\alpha)$ and $\mfm$. Clearly, each $(\pi)$ counted in Theorem \ref{thm:2ndmain} is divisible by $\mfg$. To proceed with the proof of that theorem, we need to tailor the structure theory of irreducibles introduced in \S\ref{sec:anatomy} to irreducibles divisible by $\mfg$.

We call a type $\tau = (t_1,\dots, t_h)$ \emph{principal} if $\cC_1^{t_1} \cdots \cC_h^{t_h}$ is trivial in $\Cl(K)$. If $\tau$ and $\tau'$ are any two types, we say that \emph{$\tau'$ is a subtype of $\tau$}, and write $\tau' \preceq \tau$, if each component of $\tau'$ is less than or equal to the corresponding component of $\tau$. Thus, an irreducible type is a principal type with no nonzero principal subtype.

Observe that if a type $\tau'$ has no nonzero principal subtype, then $\tau' \preceq \tau$ for some (not necessarily unique) irreducible type $\tau$. Indeed, if $\tau'=(t_1',\dots,t_h')$ is not itself irreducible, we may take $\tau = (t_1', \ldots, t_{j-1}', t_j' + 1, t_{j+1}', \ldots, t_h')$, where $j$ is chosen so that $\cC_1^{t_1'} \cdots \cC_h^{t_h'} = \cC_j^{-1}$.  An irreducible type $\tau$ is said to be \emph{maximal with respect to} $\tau'$ if $\tau' \preceq \tau$ and the length of $\tau$ is maximal among those irreducible types having $\tau'$ as a subtype.

% Note that if a type τ 0 has no nonzero principal subtype,
% then τ 0 ≤ τ for some (not necessarily unique) irreducible type τ . Indeed, if τ 0 is not itself
% irreducible, one can choose
% τ = (t 0 1 , . . . , t 0 j−1 , t 0 j + 1, t 0 j+1 , . . . , t 0 h ),
% t 0
% t 0
% where j is chosen so that C 1 1 · · · C h h = C j −1 . A subtype τ 0 of τ is called maximal relative to
% τ if τ 0 has no nonzero principal subtype and has the largest possible length among such
% subtypes of τ .

We now return to the situation of Theorem \ref{thm:2ndmain}. Since $(\alpha)$ and $\mfm$ are weakly relatively prime, the type $\tau'$ of $\mfg=(\alpha,\mfm)$ has no principal subtype. We now restate Theorem \ref{thm:2ndmain} in the form in which it will be proved, making explicit the constants in the asymptotic formula.

\begin{theorem}\label{irredprog} Let $\tau'$ be the type of $\mfg:=(\alpha,\mfm)$. As $x \to \infty$, we have
\[
\Pi(x; \mfm, \alpha) \sim \frac{1}{N(\mfg) \Phi(\mfm \mfg^{-1})} \frac{L}{h^{L}} \bigg(\sum_{\substack{ \tau' \preceq \tau \\ \tau~\text{max'l w.r.t.}~\tau'}} \frac{1}{t_1! \cdots t_h!}\bigg) \frac{x}{\log{x}} (\log \log x)^{L-1},
\]
where $\tau - \tau' = (t_1, \dots, t_h)$, and where $L$ is the (common) length of the types $\tau - \tau'$.

\end{theorem}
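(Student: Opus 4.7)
The plan is to convert $\Pi(x;\mfm,\alpha)$ into a count of ideals lying in a prescribed strict ray class and having a prescribed ``class type'', then to estimate this count by a multi-dimensional Landau--Selberg--Delange argument. Introduce the auxiliary ideals $\mfm_0 := \mfm\mfg^{-1}$ and $\mfa_0 := (\alpha)\mfg^{-1}$; a prime-by-prime valuation check confirms that both are integral and comaximal, and that the weakly-coprime hypothesis on $(\alpha,\mfm)$ is precisely the statement that $\tau'$ has no nonzero principal subtype. For each $\pi$ counted by $\Pi(x;\mfm,\alpha)$, factor $(\pi) = \mfg\mathfrak{b}$; then $\mathfrak{b}$ has type $\tau-\tau'$ for some irreducible type $\tau$ with $\tau'\preceq \tau$. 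A valuation analysis at the finite primes dividing $\mfm$, together with positivity at the real places, shows that the combined conditions $\pi\equiv\alpha \pmod{\mfm}$ and $\pi/\alpha\gg 0$ are equivalent to: $\mathfrak{b}$ is coprime to $\mfm_0$ and $\mathfrak{b}\mfa_0^{-1}\in \PrinFrac_{\mfm_0}^{+}(K)$, i.e.\ $[\mathfrak{b}]=[\mfa_0]$ in $\Cl_{\mfm_0}(K)$. This reasoning is reversible, so one obtains a bijection between the principal ideals $(\pi)$ counted by $\Pi(x;\mfm,\alpha)$ and pairs $(\tau,\mathfrak{b})$, where $\tau \succeq \tau'$ is an irreducible type and $\mathfrak{b}\in\Id_{\mfm_0}(K)$ has type $\tau-\tau'$, lies in the class $[\mfa_0]\in \Cl_{\mfm_0}(K)$, and satisfies $N\mathfrak{b}\le x/N(\mfg)$.

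Next, I isolate the main term. As in the proof of Lemma~\ref{boundmatch}, the contribution of $\tau$ which are not maximal with respect to $\tau'$, together with that of $\mathfrak{b}$ which are not squarefree or which share a prime with $\mfg$, is $O(x(\log\log x)^{L-2}/\log x)$ and so negligible. It therefore suffices, for each maximal-wrt-$\tau'$ irreducible type $\tau$ with $\tau-\tau'=(t_1,\dots,t_h)$, to estimate the number $N_\tau(y)$ of squarefree $\mathfrak{b}\in\Id_{\mfm_0}(K)$ of type $(t_1,\dots,t_h)$ lying in the class $[\mfa_0]$ with $N\mathfrak{b}\le y:=x/N(\mfg)$, and to show that $N_\tau(y)\sim \frac{L}{h^L\,\Phi(\mfm_0)\prod t_i!}\cdot \frac{y(\log\log y)^{L-1}}{\log y}$.

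I would expand $N_\tau(y)$ as a $\frac{1}{\prod t_i!}$-weighted sum over ordered tuples of distinct primes in the prescribed ordinary classes, with their product constrained to the prescribed strict ray class. The strict ray class condition is detected by orthogonality of characters on $\Cl_{\mfm_0}(K)$: since Theorem~\ref{thm:landau} gives the same leading-order count of primes in each strict ray class, the trivial character picks out the proportion $1/\Phi(\mfm_0)$ (the ordinary class of the product is automatically $[\mfg]^{-1}$), while non-trivial characters produce Dirichlet series with no pole at $s=1$ and thus contribute lower-order error via the power-saving term in Theorem~\ref{thm:landau}. For the ordered tuple count without the ray class condition, iterated partial summation using Landau's theorem gives an asymptotic of $(1/h)^L\cdot L\cdot y(\log\log y)^{L-1}/\log y$ for each specific assignment of classes to positions, and summing the $L!/\prod t_i!$ such assignments followed by dividing by $L!$ (to unorder) yields the claimed formula for $N_\tau(y)$. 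Finally, summing $N_\tau(y)$ over the maximal-wrt-$\tau'$ irreducible types $\tau$ and substituting $y=x/N(\mfg)$ (which replaces $\log y$ by $\log x$ and $\log\log y$ by $\log\log x$ asymptotically) assembles the constants exactly as stated. The main obstacle will be controlling the error terms uniformly in the $L$-fold iterated Landau sum, particularly when some $N(\mfp_j)$ is either very small or close to $y$; this is managed by partial summation and careful bookkeeping of the error terms in Theorem~\ref{thm:landau}, as in the classical derivation of $\pi_k(x)$-type asymptotics.
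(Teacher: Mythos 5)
Your proposal is correct in outline and, for the most part, retraces the paper's own route: passing from $\pi$ to $\mathfrak{b}=(\pi)\mfg^{-1}$ and showing that the congruence and total-positivity conditions on $\pi$ are equivalent to $\mathfrak{b}$ being prime to $\mfm\mfg^{-1}$ and lying in the strict ray class of $(\alpha)\mfg^{-1}$ is exactly the paper's Lemma~\ref{rayclass} (which you assert but would still need to prove via the valuation argument you sketch), and the reduction to maximal types, squarefree $\mathfrak{b}$, and tuples containing one large prime mirrors the paper's Lemma~\ref{bigprime}. Where you genuinely differ is in extracting the strict ray class condition: the paper avoids characters by writing $\mathfrak{b}=\mfj_0\mfp_L$ with $\mfp_L$ the large prime and noting that, given $\mfj_0$, the strict ray class of $\mfp_L$ is uniquely determined, so Landau's theorem applies directly and produces the factor $1/h_{K,\mfm\mfg^{-1}}$; you instead use orthogonality of characters of $\Cl_{\mfm\mfg^{-1}}(K)$. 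That can be made to work, but one step as you state it is incorrect and needs repair: it is not true that every non-trivial character contributes only error terms. Your prime sums run over primes in a \emph{fixed ordinary ideal class}, and any character of $\Cl_{\mfm\mfg^{-1}}(K)$ that factors through $\Cl(K)$ is constant on such primes, hence yields a full-size main term (its ``$L$-function has no pole'' argument does not apply to class-restricted sums). The correct dichotomy is trivial versus non-trivial on the kernel $H$ of $\Cl_{\mfm\mfg^{-1}}(K)\to\Cl(K)$: the $h$ characters trivial on $H$ each contribute the same main term, because the ordinary class of the product is forced to be $[\mfg]^{-1}$ (as your parenthetical hints), and together they give the proportion $h/h_{K,\mfm\mfg^{-1}}=1/\Phi(\mfm\mfg^{-1})$; characters non-trivial on $H$ satisfy $\sum_{\cD}\chi(\cD)=0$ over the strict classes $\cD$ lying above a fixed ordinary class, so Landau's power-saving error term makes them negligible. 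If you literally keep only the trivial character as main term, your constant comes out a factor $h$ too small. With that fix, and with the deferred error bookkeeping actually carried out (in particular the case where no prime exceeds $y^{1-1/\log\log y}$, which is precisely what Lemma~\ref{bigprime} handles), your combinatorial accounting --- the $L!/\prod t_i!$ assignments, the factor $L$ from the large prime, the $1/h$ per class-restricted prime, and the substitution $y=x/N(\mfg)$ --- assembles the constants exactly as in the theorem.
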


Theorem \ref{irredprog} follows immediately from the next proposition, upon summing over all types $\tau$ maximal relative to $\tau'$.

\begin{proposition}\label{irredspecifictype} Keep the notation of Theorem \ref{irredprog}. Let $\tau$ be a fixed irreducible type which is maximal with respect to $\tau'$. The number of ideals of norm at most $x$ generated by an irreducible element $\pi$ of type $\tau$ with $\pi \equiv \alpha \pmod \mfm$ and $\pi/\alpha \gg 0$ is asymptotically equal to
\begin{align}\label{irredspecifictypedisplay}
 \frac{1}{N(\mfg)\Phi(\mfm \mfg^{-1})} \frac{L}{h^L}\left(\prod_{j =1}^{h} \frac{1}{t_j!} \right)  \frac{x}{\log x} (\log\log x)^{L - 1},
\end{align}
where $\tau-\tau'=(t_1, \ldots, t_h)$ and $L=t_1+\dots+t_h$.
\end{proposition}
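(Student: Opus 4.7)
The plan is to translate the count into one over squarefree integral ideals $\mfb = (\pi)\mfg^{-1}$ of prescribed type and strict ray class modulo $\mfm' := \mfm\mfg^{-1}$, and then apply Landau's theorem (Theorem~\ref{thm:landau}) together with a standard Selberg--Sathe-style counting. Every $\pi$ counted in the proposition satisfies $\mfg \mid (\pi)$ (because $\pi - \alpha \in \mfm \subseteq \mfg$ and $\alpha \in \mfg$), so we may write $(\pi) = \mfg\mfb$ with $\mfb$ integral. A prime-by-prime analysis of $(\pi, \mfm) = (\alpha, \mfm) = \mfg$ shows that $\mfb$ is coprime to $\mfm'$ and has type exactly $\tau - \tau' = (t_1, \ldots, t_h)$; in particular the ordinary class $[\mfb]$ is forced to equal $[\mfg]^{-1}$, so $\mfg\mfb$ is automatically principal (consistent with $\tau$ being irreducible). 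The non-squarefree contribution is $O(x(\log\log x)^{L-2}/\log x)$ by an argument parallel to the squarefree reduction in the proof of Lemma~\ref{boundmatch}, and is therefore negligible.

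Next, I would show that the conditions $\pi \equiv \alpha \pmod{\mfm}$ and $\pi/\alpha \gg 0$ together translate into the single condition $[\mfb] = [(\alpha)\mfg^{-1}]$ inside $\Cl_{\mfm'}(K)$. Setting $\mfa_\alpha := (\alpha)\mfg^{-1}$ and $\gamma := \pi/\alpha$, one has $(\gamma) = \mfb\mfa_\alpha^{-1}$, and a case-by-case valuation check (using that $\ord_\mfp(\alpha) = \ord_\mfp(\mfg)$ at each $\mfp \mid \mfm'$) shows that the imposed conditions on $\pi$ are equivalent to $\gamma \equiv 1$ strictly modulo $\mfm'$; letting $\pi$ range over unit multiples of a fixed generator of $\mfg\mfb$ then yields the strict-ray-class equivalence. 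Thus Proposition~\ref{irredspecifictype} reduces to estimating the number of squarefree integral $\mfb$ of type $(t_1, \ldots, t_h)$, coprime to $\mfm'$, lying in a specific strict ray class $\cC^* \in \Cl_{\mfm'}(K)$ above $[\mfg]^{-1}$, with $N(\mfb) \le y := x/N(\mfg)$.

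For the counting step, the total number of squarefree $\mfb$ of type $(t_1, \ldots, t_h)$ with $N(\mfb) \le y$ is asymptotic to $\frac{L}{h^L \, t_1! \cdots t_h!} \cdot \frac{y (\log\log y)^{L-1}}{\log y}$; this can be shown by induction on $L$, peeling off one prime factor at a time and using Landau's theorem together with the analogue of Mertens' theorem $\sum_{\mfp \in \cC_j, \, N(\mfp) \le y} N(\mfp)^{-1} \sim h^{-1}\log\log y$ that follows from it by partial summation. Equivalently, one combines the Landau--Selberg--Sathe formula $\sim y(\log\log y)^{L-1}/((L-1)!\log y)$ for squarefree ideals with $L$ prime factors with the multinomial weight $\binom{L}{t_1,\ldots,t_h} h^{-L}$ coming from the equidistribution of primes across the $h$ ordinary classes. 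Landau's theorem further yields asymptotic equidistribution of these ideals among the $\Phi(\mfm')$ strict ray classes above $[\mfg]^{-1}$: freeze any $L-1$ prime factors of $\mfb$ and apply Landau to the last, within its ordinary class. Dividing by $\Phi(\mfm')$ and substituting $y = x/N(\mfg)$ recovers the asymptotic \eqref{irredspecifictypedisplay}.

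The main obstacle is the ray-class translation in the second paragraph: the valuation check must be carried out separately for three types of prime location (namely $\mfp \nmid \mfm$, $\mfp \mid \mfg$ with $\mfp \nmid \mfm'$, and $\mfp \mid \mfm'$), and one must confirm that the unit ambiguity in choosing a generator of $\mfg\mfb$ is absorbed precisely into the equivalence relation defining $\Cl_{\mfm'}(K)$. The counting step, while standard in spirit, also demands uniform control of the Landau error terms across all the prime factors being peeled off.
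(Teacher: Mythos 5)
Your proposal is correct and follows essentially the same route as the paper: the translation $(\pi)\mapsto(\pi)\mfg^{-1}$ with the prime-by-prime valuation and ray-class check is exactly the paper's Lemma \ref{rayclass}, and the counting by peeling off one large prime ideal with Landau's theorem and summing Mertens-type estimates over the remaining small primes in each class is the paper's argument (its Lemma \ref{bigprime} supplies precisely the error control you flag as the remaining issue). The only cosmetic difference is that you count all squarefree ideals of type $\tau-\tau'$ and then divide by $\Phi(\mfm\mfg^{-1})$ via equidistribution, whereas the paper works directly in the target strict ray class with matching upper and lower bounds; since your equidistribution step is justified by the same freeze-the-small-primes-and-apply-Landau computation, the two arrangements amount to the same proof.
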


The proof of Proposition \ref{irredspecifictype} depends on the following lemma, which allows us to restrict our attention to ideals divisible by a large prime factor.

\begin{lemma}\label{bigprime}
Fix $k \in \bZ^+$. As $x \to \infty$, the number of (integral) ideals $\mfa$ with $N(\mfa) \le x$ having $k$ prime ideal factors (counted with multiplicity) which are not divisible by a prime ideal with norm exceeding $x^{1 - 1/\log\log x}$ is
\[
o\bigg( \frac{x (\log\log x)^{k - 1}}{\log x} \bigg).
\]
\end{lemma}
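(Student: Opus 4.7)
Let $y := x^{1 - 1/\log\log x}$, and for each non-negative integer $j$ write $\pi_j(z)$ for the number of integral ideals $\mfa$ of $\Z_K$ with $N(\mfa) \le z$ and exactly $j$ prime ideal factors (counted with multiplicity), so $\pi_0(z)=1$ for $z\ge 1$ and $\pi_1(z)=\pi_K(z)$. Let $N_k(x,y)$ denote the quantity in the statement of the lemma. The Landau--Selberg--Sathe generalization of the prime ideal theorem (derivable from Theorem \ref{thm:landau} by partial summation) yields
\[
\pi_k(x) \sim \frac{c_K\, x\, (\log\log x)^{k-1}}{(k-1)!\log x}
\]
for a positive constant $c_K$. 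Since this matches the order of the bound asserted in the lemma, it suffices to prove $N_k(x,y) = o(\pi_k(x))$.

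The core step is an exact decomposition, valid once $\log\log x > 2$ (so that $y^2>x$). In that regime, any $\mfa$ with $N(\mfa)\le x$ has at most one prime ideal divisor of norm exceeding $y$, and any such prime occurs with multiplicity exactly $1$ (else $N(\mfa) \ge y^2 > x$). Partitioning $\pi_k(x)$ according to whether $\mfa$ has such a ``large'' prime factor, and in the affirmative case factoring it out, yields
\[
\pi_k(x) = N_k(x,y) + \sum_{y < N(\mfp) \le x} \pi_{k-1}(x/N(\mfp)).
\]
(The case $k=1$ of the lemma is immediate: $N_1(x,y)=\pi_K(y) \sim y/\log y = o(x/\log x)$ by the prime ideal theorem.) For $k\ge 2$ it remains to show that the right-hand sum is asymptotic to $\pi_k(x)$.

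To evaluate the sum I would pass to the integral $\int_y^x \pi_{k-1}(x/t)\,dt/\log t$, insert the Landau asymptotic for $\pi_{k-1}$, and change variables to $v := \log(x/t)/\log x \in [0,\,1/\log\log x]$. A pleasant cancellation occurs: the factor $x/t = x^v$ from $\pi_{k-1}(x/t)$ combines with the factor $x^{1-v}$ emerging from $dt$ to give simply $x$, so the sum reduces, up to smaller-order terms, to
\[
\frac{c_K\, x}{(k-2)!\log x}\int_{v_0}^{1/\log\log x}\frac{(\log(v\log x))^{k-2}}{v(1-v)}\,dv,
\]
where $v_0$ is of order $1/\log x$. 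Since $v\le 1/\log\log x$ is small, $1/(1-v)\sim 1$; the substitution $s=\log(v\log x)$, $ds=dv/v$, converts the integrand to $s^{k-2}$ and the interval (essentially) to $[0,\log\log x]$. Thus the integral is $\sim (\log\log x)^{k-1}/(k-1)$, and multiplying through reproduces exactly $c_K\,x(\log\log x)^{k-1}/((k-1)!\log x)\sim\pi_k(x)$, as desired.

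The principal obstacle is making rigorous the integral evaluation at the endpoint where $v$ is near $1/\log x$, i.e.\ $N(\mfp)$ is very close to $x$; there the Landau asymptotic for $\pi_{k-1}(x/N(\mfp))$ no longer applies, since its argument can be of bounded size. I would dispose of this tail separately, splitting off the range $N(\mfp) > x/\log x$ and estimating it via Fubini (bounding $\sum \pi_{k-1}(x/N(\mfp))$ by a count of pairs $(\mfp,\mfb)$ with $N(\mfp\mfb)\le x$ of the appropriate form). A direct computation then shows this tail contributes only $O(x(\log\log x)^{k-2}\log\log\log x/\log x)$, which is $o(\pi_k(x))$ and hence harmless.
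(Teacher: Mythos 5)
Your proposal is correct in outline, but it takes a genuinely different route from the paper and leans on a substantially heavier input. The paper's proof is a direct upper bound that never needs asymptotics for $k$-almost-prime ideals: it discards ideals of norm at most $x^{1-1/(2\log\log x)}$, orders the prime factors $N(\mfp_1)\le\cdots\le N(\mfp_k)$, observes that the hypothesis $N(\mfp_k)\le x^{1-1/\log\log x}$ forces $N(\mfp_1\cdots\mfp_{k-1})\ge x^{1/(2\log\log x)}$ and hence $N(\mfp_{k-1})\ge x^{1/(2k\log\log x)}$, and then counts the largest prime by the prime ideal theorem and the rest by Mertens-type sums, the constrained $\mfp_{k-1}$ contributing only $\log\log\log x+O(1)$; everything there really is obtained from Theorem \ref{thm:landau} by partial summation. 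Your route instead computes the complement: the identity $\pi_k(x)=N_k(x,y)+\sum_{y<N(\mfp)\le x}\pi_{k-1}(x/N(\mfp))$ is valid for large $x$ (the large prime is unique, appears to the first power, and the cofactor automatically has no prime factor of norm exceeding $y$), your integral evaluation of the sum is sound, and your disposal of the range $N(\mfp)>x/\log x$ works, since there the cofactors have norm at most $\log x$ and Mertens over that range yields only powers of $\log\log\log x$. The one real weak point is the input $\pi_j(x)\sim c_K\,x(\log\log x)^{j-1}/((j-1)!\log x)$ with the \emph{same} constant for $j=k-1$ and $j=k$ --- this is true (indeed $c_K=1$), and it is precisely what makes the main terms cancel, but it is \emph{not} ``derivable from Theorem \ref{thm:landau} by partial summation'': partial summation gives the prime ideal theorem and Mertens-type sums, not the count of ideals with exactly $k$ prime ideal factors. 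To make your argument self-contained you must either cite a Landau/Selberg--Delange-type theorem for almost-prime ideals (and note the uniformity needed when applying the $\pi_{k-1}$ asymptotic to arguments as small as $\log x$), or prove that asymptotic by induction on $k$ using essentially the decomposition you have set up --- at which point you have done strictly more work than the paper's elementary upper bound. With such a citation supplied, your proof closes, and it even recovers the same quantitative saving of a factor $\log\log\log x/\log\log x$ that the paper obtains.
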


\begin{proof}[Proof of Lemma \ref{bigprime}] The number of ideals $\mfa$ with $N(\mfa) \leq x^{1 - \frac{1}{2\log\log x}}$ is $O(x^{1 - \frac{1}{2\log\log x}})$, which is negligible. Thus, we may restrict our attention to $\mfa$ with $N(\mfa) > x^{1 - \frac{1}{2\log\log x}}$. When $k=1$, there are no such $\mfa$ meeting the conditions of the lemma, and so we may assume that $k\ge 2$. Write $\mfa = \mfp_1 \cdots \mfp_k$, where $N(\mfp_1) \leq \cdots \leq N(\mfp_k)$. Then
\[
N(\mfp_k) > x^{1 - \frac{1}{2\log\log x}} N(\mfp_1 \cdots \mfp_{k-1})^{-1}.
\]
So if $N(\mfp_k) \leq x^{1 - 1/\log\log x}$, then
\[
\prod_{j = 1}^{k - 1} N(\mfp_j) \geq x^{\frac{1}{2\log\log x}}.
\]
This implies that $N(\mfp_{k-1}) \geq x^{\frac{1}{2k\log\log x}}$. We fix $\mfp_1, \ldots, \mfp_{k-1}$ and count the number of corresponding values of $\mfp_k$. Since $x/N(\mfp_1 \cdots \mfp_{k-1}) \geq N(\mfp_k) \geq x^{1/2k}$ (since $N(\mfp_k)^k \geq N(\mfa) \geq x^{1/2}$), we can use the prime ideal theorem to estimate the number of possibilities for $\mfp_k$ given $\mfp_1, \ldots, \mfp_{k-1}$ as
\[
\ll \frac{x}{\log x} \frac{1}{N(\mfp_1 \cdots \mfp_{k-1})}.
\]
We now sum on $\mfp_1, \ldots, \mfp_{k-1}$. By the prime ideal theorem (with the error term of Theorem \ref{thm:landau}) and partial summation, we have
\[
\sum_{N(\mfp) \leq x} \frac{1}{N(\mfp)} = \log\log x + O(1);
\]
this provides an upper bound for the sum on $\mfp_j$ for $1 \leq j \leq k - 2$. For the sum on $\mfp_{k-1}$, we use the estimate
\[
\sum_{x^{\frac{1}{2k\log\log x}} \leq N(\mfp) \leq x} \frac{1}{N(\mfp)} = \log\log\log x + O(1).
\]
Thus, we obtain an upper bound on the number of $\mfa$ that is
\[
\ll \frac{x(\log\log x)^{k-2}}{\log x} \log\log\log x,
\]
which implies the lemma.
% The second part of the lemma follows from an argument similar to the one supplied in the second paragraph of \S \ref{upperbound}.
\end{proof}

The following lemma will reduce the problem of counting our ideals $(\pi)$ to one of counting ideals of a certain type lying in a specific  strict ray class.

\begin{lemma}\label{rayclass} Let $\mfa$ be a nonzero integral ideal of $\Z_K$ divisible by $\mfg$. Then $\mfa = (\rho)$ for some $\rho \equiv \alpha \pmod \mfm$ with $\rho/\alpha \gg 0$ if and only if $\mfa \mfg^{-1}$ and $(\alpha) \mfg^{-1}$ represent the same element of $\Cl_{\mfm \mfg^{-1}}(K)$.
\end{lemma}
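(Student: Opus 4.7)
The plan is to prove the biconditional by a direct valuation-by-valuation calculation, exploiting the following trichotomy coming from $\mfg = (\alpha,\mfm)$: every prime $\mfp\mid\mfm$ falls into exactly one of two cases, namely $\mfp\mid\mfm\mfg^{-1}$ (equivalently, $\operatorname{ord}_\mfp(\mfg)=\operatorname{ord}_\mfp((\alpha))<\operatorname{ord}_\mfp(\mfm)$) or $\mfp\nmid\mfm\mfg^{-1}$ (equivalently, $\operatorname{ord}_\mfp(\mfg)=\operatorname{ord}_\mfp(\mfm)\le\operatorname{ord}_\mfp((\alpha))$). This dichotomy is what lets us translate the condition ``$\rho\equiv\alpha\pmod{\mfm}$'' into the ray-class congruence on $\gamma := \rho/\alpha$ modulo $\mfm\mfg^{-1}$.

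For the forward direction, I would start from $\mfa=(\rho)$ with $\rho\equiv\alpha\pmod{\mfm}$ and $\rho/\alpha\gg 0$, and first check that $\mfa\mfg^{-1}$ and $(\alpha)\mfg^{-1}$ actually lie in $\Id_{\mfm\mfg^{-1}}(K)$, so that the class-group statement makes sense. The second is immediate from the definition of $\mfg$; for the first, the congruence $\rho\equiv\alpha\pmod{\mfm}$ gives $\mfa+\mfm=(\alpha)+\mfm=\mfg$, so $\mfa\mfg^{-1}$ is coprime to $\mfm\mfg^{-1}$. Setting $\gamma:=\rho/\alpha$, one has $(\gamma)\cdot(\alpha)\mfg^{-1}=\mfa\mfg^{-1}$ and $\gamma\gg 0$. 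The remaining condition $\gamma\equiv 1\operatorname{mod}^+\mfm\mfg^{-1}$ is the inequality $\operatorname{ord}_\mfp(\gamma-1)\ge\operatorname{ord}_\mfp(\mfm\mfg^{-1})$ for $\mfp\mid\mfm\mfg^{-1}$; using $\operatorname{ord}_\mfp(\alpha)=\operatorname{ord}_\mfp(\mfg)$ for such $\mfp$, this reduces to $\operatorname{ord}_\mfp(\rho-\alpha)\ge\operatorname{ord}_\mfp(\mfm)$, which is just the assumed congruence.

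For the reverse direction, I would start from a $\gamma\in K$ with $\gamma\equiv 1\operatorname{mod}^+\mfm\mfg^{-1}$ such that $(\gamma)\cdot(\alpha)\mfg^{-1}=\mfa\mfg^{-1}$, and set $\rho:=\gamma\alpha$. Then $(\rho)=\mfa$ and $\rho/\alpha=\gamma\gg 0$ automatically. Verifying $\rho\equiv\alpha\pmod{\mfm}$ requires $\operatorname{ord}_\mfp(\rho-\alpha)\ge\operatorname{ord}_\mfp(\mfm)$ at every $\mfp\mid\mfm$, and I would handle this by splitting according to the dichotomy above. For $\mfp\mid\mfm\mfg^{-1}$, the ray-class congruence on $\gamma$ yields the required bound by the same valuation manipulation, run in reverse. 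For $\mfp\mid\mfm$ with $\mfp\nmid\mfm\mfg^{-1}$, I would use $\operatorname{ord}_\mfp(\mfm)=\operatorname{ord}_\mfp(\mfg)$ together with $\mfg\mid\mfa=(\rho)$ and $\mfg\mid(\alpha)$ to conclude that both $\operatorname{ord}_\mfp(\rho)$ and $\operatorname{ord}_\mfp(\alpha)$ are at least $\operatorname{ord}_\mfp(\mfm)$, and therefore so is $\operatorname{ord}_\mfp(\rho-\alpha)$; here the hypothesis that $\mfg$ divides $\mfa$ is essential.

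There is no real obstacle beyond careful bookkeeping; the one subtle point worth flagging explicitly in the write-up is the verification that $\mfa\mfg^{-1}$ is coprime to $\mfm\mfg^{-1}$ in the forward direction, since without it the class-group statement would be vacuous. All other steps are routine valuation arithmetic using the definition of the gcd ideal.
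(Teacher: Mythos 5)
Your proposal is correct and follows essentially the same route as the paper: the same dichotomy on primes $\mfp\mid\mfm$ according to whether $\mfp\mid\mfm\mfg^{-1}$, the same valuation computation translating $\rho\equiv\alpha\pmod{\mfm}$ into $\gamma=\rho/\alpha\equiv 1~\mathrm{mod}^{+}\,\mfm\mfg^{-1}$, and the same use of $\mfg\mid\mfa$ in the reverse direction for primes with $\ord_\mfp(\mfg)=\ord_\mfp(\mfm)$. The only cosmetic difference is your verification that $\mfa\mfg^{-1}$ is coprime to $\mfm\mfg^{-1}$ via the identity $\mfa+\mfm=(\alpha)+\mfm=\mfg$, where the paper instead deduces $\ord_\mfp(\rho/\alpha)=0$ from the strong triangle inequality; both are equally routine.
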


\begin{proof} First, suppose that $\mfa$ admits a generator $\rho \equiv \alpha \pmod \mfm$ with $\rho/\alpha \gg 0$. Then
\begin{equation}\label{eq:IGrel}
\mfa \mfg^{-1} = (\rho/\alpha) \cdot ((\alpha)\mfg^{-1}).
\end{equation}
We claim that
\begin{equation}\label{eq:rhoalpha}\rho/\alpha \equiv 1~\mathrm{mod}^{+}~\mfm \mfg^{-1}.\end{equation} By assumption, $\rho/\alpha \gg 0$, so it remains to show that $\ord_\mfp(\rho/\alpha - 1) \geq \ord_\mfp(\mfm \mfg^{-1})$ for all $\mfp \mid \mfm \mfg^{-1}$. If $\mfp$ divides $\mfm \mfg^{-1}$, then $\ord_\mfp(\mfg) < \ord_\mfp(\mfm)$; since $\ord_\mfp(\mfg) =\min\{\ord_\mfp(\alpha),\ord_{\mfp}(\mfm)\}$, it must be that $\ord_\mfp(\mfg) = \ord_\mfp(\alpha)$, and
\begin{align*}
\ord_\mfp(\mfm \mfg^{-1}) = \ord_\mfp(\mfm) - \ord_\mfp(\mfg) &= \ord_\mfp(\mfm) - \ord_\mfp(\alpha) \\
&\leq \ord_\mfp(\rho - \alpha) - \ord_\mfp(\alpha) = \ord_\mfp(\rho/\alpha - 1).
\end{align*}

In view of \eqref{eq:IGrel} and \eqref{eq:rhoalpha} , $\mfa \mfg^{-1}$ and $(\alpha)\mfg^{-1}$ represent the same element of $\Cl_{\mfm \mfg^{-1}}(K)$, as long as $\ord_{\mfp}(\mfa \mfg^{-1})=\ord_{\mfp}((\alpha)\mfg^{-1})=0$ for all  $\mfp \mid \mfm \mfg^{-1}$. That $\ord_{\mfp}((\alpha)\mfg^{-1})=0$ for all $\mfp \mid \mfm \mfg^{-1}$ is clear, since $\mfg$ is the gcd of $(\alpha)$ and $\mfm$. Since $\ord_{\mfp}(\rho/\alpha-1) > 0$ for all $\mfp \mid \mfm \mfg^{-1}$, the strong triangle inequality yields
\[ \ord_{\mfp}(\rho/\alpha) = \ord_{\mfp}((\rho/\alpha-1)+1) = 0 \]
for such $\mfp$. Thus, \eqref{eq:IGrel} implies that $\ord_{\mfp}(\mfa \mfg^{-1})=0$ for all $\mfp \mid \mfm \mfg^{-1}$.

We now turn to the converse implication. Suppose that $\mfa \mfg^{-1}$ and $(\alpha)\mfg^{-1}$ represent the same element of $\Cl_{\mfm \mfg^{-1}}(K)$. Then $\mfa \mfg^{-1} = \gamma (\alpha) \mfg^{-1}$, where $\gamma \in K$ satisfies $\gamma \equiv 1~\mathrm{mod}^{+}~{\mfm \mfg^{-1}}$. Thus $\mfa = \gamma \alpha \bZ_K$; since $\mfa$ is integral, $\rho:=\gamma\alpha \in \bZ_K$. The proof is completed by showing that $\rho/\alpha \gg 0$ and that $\rho \equiv \alpha\pmod{\mfm}$.

Since $\gamma \equiv 1~\mathrm{mod}^{+}~{\mfm \mfg^{-1}}$, we have $\rho/\alpha = \gamma \gg 0$. To prove the congruence for $\rho$ mod $\mfm$, we start by noticing if $\mfp \mid \mfm \mfg^{-1}$, then
\begin{align*}
\ord_\mfp(\rho - \alpha) = \ord_\mfp(\alpha) + \ord_\mfp(\gamma - 1) &\geq \ord_\mfp(\alpha) + \ord_\mfp(\mfm \mfg^{-1}) \\ &= \ord_\mfp(\alpha) + \ord_\mfp(\mfm) - \ord_\mfp(\mfg) \geq \ord_\mfp(\mfm).
\end{align*}
If $\mfp \mid \mfm$ but $\mfp \nmid \mfm \mfg^{-1}$, then $\ord_{\mfp}(\mfg) = \ord_{\mfp}(\mfm)$. Since $\ord_{\mfp}(\mfg) = \min\{\ord_{\mfp}(\alpha), \ord_{\mfp}(\mfm)\}$, it follows that $\ord_{\mfp}(\alpha) \ge \ord_{\mfp}(\mfm)$. Moreover, since $\mfg\mid \mfa = (\rho)$, we know that for these same $\mfp$,
\[ \ord_{\mfp}(\rho) \ge \ord_{\mfp}(\mfg) =\ord_{\mfp}(\mfm); \]
hence,
\[ \ord_{\mfp}(\rho-\alpha) \ge \min\{\ord_{\mfp}(\rho),\ord_{\mfp}(\alpha)\} \ge \ord_{\mfp}(\mfm). \]
Putting the above arguments together shows that $\ord_{\mfp}(\rho-\alpha)\ge \ord_{\mfp}(\mfm)$ for all $\mfp \mid \mfm$, and so $\rho \equiv \alpha\pmod{\mfm}$.
\end{proof}

\subsection{Proof of Proposition \ref{irredspecifictype}.} We are to count the number of ideals $(\pi)$ of bounded norm where $\pi$  is an irreducible of type $\tau$ with $\pi \equiv \alpha \pmod \mfm$ and $\pi/\alpha \gg 0$. Rather than count these $(\pi)$ directly, it is more convenient to count values of $\mfj:=(\pi)\mfg^{-1}$.

By Lemma \ref{rayclass}, the integral ideals $\mfj$ which arise are precisely those of type $\tau-\tau'$ with $\mfj$ relatively prime to $\mfm \mfg^{-1}$ and lying in the strict ray class of $(\alpha)\mfg^{-1}$ modulo $\mfm \mfg^{-1}$. Moreover, the condition that $N(\pi)\le x$ corresponds to the constraint that $N(\mfj) \le X:=x/N(\mfg)$.

We first prove an upper bound on the number of these $\mfj$ that matches the asymptotic formula claimed in Proposition \ref{irredspecifictype}.

Since $\mfj$ has type $\tau-\tau'$, which is of length $L$, the $\mfj$ under consideration are products of $L$ prime ideals (possibly with repetition). By Lemma \ref{bigprime}, we can assume one of these prime ideals has norm exceeding $X^{1 - 1/\log\log X}$, at the cost of excluding $o( X (\log\log X)^{L - 1} / \log X)$ values of $\mfj$, which is negligible. Thus,
\[
\mfj = \mfp_1 \cdots \mfp_L,
\]
where the prime ideals $\mfp_i$ are comaximal with $\mfm \mfg^{-1}$, and where $N(\mfp_L) > X^{1 - 1/\log\log X}$.

Recall that $t_1,\dots,t_h$ are defined by the equation $\tau-\tau'=(t_1,\dots,t_h)$. Since $\mfj$ has type $\tau-\tau'$, the class $\cC_i$ of $\mfp_L$  must satisfy $t_i>0$. We fix an index $i$ ($1\le i \le h$) with $t_i > 0$ and bound the number of $\mfj$ with $\mfp_L$ belonging to $\cC_i$.

Write $\mfj = \mfj_0 \mfp_L$. Then $\mfj_0$ is a product of $L-1$ prime ideals (with multiplicity), $t_j$ of which belong to the class $\cC_j$ for $j\ne i$, and $t_i-1$ of which belong to the class $\cC_i$. Moreover, given $\mfj_0$, the strict ray class of $\mfp_L$ modulo $\mfm \mfg^{-1}$ is uniquely determined as the class of $(\alpha) \mfg^{-1} \mfj_0^{-1}$. Since $N(\mfj_0 \mfp_L) = N(\mfj) \le X$, we also have that
\[
N(\mfp_L) \leq \frac{X}{N(\mfj_0)}.
\]
Noting that $X/N(\mfj_0) \ge N(\mfp_L) \ge X^{1-1/\log\log{X}}$, Landau's equidistribution theorem (for strict ray classes modulo $\mfm \mfg^{-1}$) implies that the number of possibilities for $\mfp_L$ given $\mfj_0$ is
\begin{align*}
\le (1+o(1)) \frac{X/N(\mfj_0)}{h_{K, \mfm \mfg^{-1}} \log(X/N(\mfj_0))} \leq \Big(\frac{1}{h_{K, \mfm \mfg^{-1}}} + o(1)\Big) \frac{1}{N(\mfj_0)} \frac{x}{N(\mfg) \log x}.
\end{align*}
(Recall that $X=x/N(\mfg)$.) Now sum on $\mfj_0$. The contribution of nonsquarefree values of $\mfj_0$ to $\sum\frac{1}{N(\mfj_0)}$ is zero unless $L \ge 3$, in which case it is
\[ \le \left(\sum_{N(\mfp)\le x} \frac{1}{N(\mfp)^2}\right)  \left(\sum_{N(\mfp)\le x} \frac{1}{N(\mfp)}\right)^{L-3}\ll (\log\log{x})^{L-3}. \]
The contribution of squarefree values of $\mfj_0$ to $\sum\frac{1}{N(\mfj_0)}$ is, by the multinomial theorem,
\[ \le \left(\prod_{\substack{1\le j \le h \\ j \ne i}} \frac{1}{t_j!} \Bigg(\sum_{\substack{N(\mfp) \le x \\ \mfp \in \cC_j}} \frac{1}{N(\mfp)}\Bigg)^{t_j}\right) \cdot \frac{1}{(t_i-1)!} \Bigg(\sum_{\substack{N(\mfp) \le x \\ \mfp \in \cC_i}} \frac{1}{N(\mfp)}\Bigg)^{t_i-1}.  \]
By Landau's theorem and partial summation,
$\sum_{\substack{N(\mfp) \leq x \\ \mfp \in \cC}} \frac{1}{N(\mfp)} = \frac{1}{h} \log\log x + O(1)$ for any ideal class $\cC$. Putting this in above, we find that
\[ \sum \frac{1}{N(\mfj_0)}\le (1+o(1)) \left(t_i \prod_{j=1}^{h}\frac{1}{t_j!}\right) \left(\frac{1}{h}\log\log{x}\right)^{L-1}, \]
so that the number of corresponding $\mfj$ is
\[ \le (1 + o(1)) \frac{1}{N(\mfg) h_{K,\mfm \mfg^{-1}} h^{L-1}} \left(t_i \prod_{j=1}^{h}\frac{1}{t_j!}\right) \frac{x}{\log x} (\log\log{x})^{L-1}. \]
Now sum on $i$ with $t_i\ne 0$. Since $\sum_{i:~t_i \ne 0} t_i = \sum_{i} t_i = L$, we conclude that the total number of $\mfj$ is
\[ \le (1 + o(1)) \frac{L}{N(\mfg) h_{K,\mfm \mfg^{-1}} h^{L-1}} \left(\prod_{j=1}^{h}\frac{1}{t_j!}\right) \frac{x}{\log x} (\log\log{x})^{L-1}. \]
Recalling that $\Phi(\mfm \mfg^{-1}) = h_{K,\mfm \mfg^{-1}}/h$, we see that \[ \frac{L}{N(\mfg) h_{K,\mfm \mfg^{-1}} h^{L-1}} = \frac{1}{N(\mfg) \Phi(\mfm \mfg^{-1})} \cdot \frac{1}{h^{L}}.\]
Making this substitution in the previous display,  our upper bound matches the expression \eqref{irredspecifictypedisplay} from Proposition \ref{irredspecifictype}.

The proof of the lower bound in Proposition \ref{irredspecifictype} is similar. Fix $i \in \{1, 2, \ldots, h\}$ with $t_i > 0$. Let $\mfp_1, \ldots, \mfp_{L-1}$ be distinct prime ideals not dividing $\mfm \mfg^{-1}$ whose norms belong to the interval $[2, X^{1/\log\log X}]$, with $t_j$ of these $\mfp_j$ belonging to the class $\cC_j$ for $j \neq i$, and $t_i - 1$ of the $\mfp_j$ belonging to the class $\cC_i$. Let $\mfp_L$ be a prime ideal with
\[
X^{1/2} \leq N(\mfp_L) \leq X/N(\mfp_1 \cdots \mfp_{L-1})
\]
belonging to the strict ray class modulo $\mfm \mfg^{-1}$ of $(\alpha) \mfg^{-1} \mfp_1^{-1} \cdots \mfp_{L - 1}^{-1}$. Define
\[ \mfj = \mfp_1 \cdots \mfp_L. \]

Let us see that $\mfj$ satisfies the conditions set down at the start of the proof. The ideal $\mfg \mfj$ is principal, since it is lies in the same strict ray class modulo $\mfm \mfg^{-1}$ as the principal ideal $(\alpha)$. This implies that $\mfp_L \in \cC_i$. Indeed, let $\mfp$ be a prime ideal from $\cC_i$. Then $\mfp_1 \cdots \mfp_{L-1} \mfp$ has type $(t_1,\dots,t_h)=\tau-\tau'$, so that $\mfg \mfp_1 \cdots \mfp_{L-1} \mfp$ has type $\tau$. Since $\tau$ is an irreducible type, $\mfg \mfp_1 \cdots \mfp_{L-1} \mfp$
is principal, and hence so is
\[ (\mfg \mfp_1 \cdots \mfp_{L-1} \mfp) (\mfg \mfj)^{-1} = \mfp \mfp_L^{-1}. \]
But this is only possible if $\mfp_L \in \cC_i$. It follows that $\mfj$ has type $\tau-\tau'$. By construction, $\mfj$ is relatively prime to $\mfm \mfg^{-1}$, lies in the strict ray class of $(\alpha)\mfg^{-1}$ modulo $\mfm \mfg^{-1}$, and satisfies $N(\mfj) \le X$.

So the proof of Proposition \ref{irredspecifictype} will be complete if we show that the number of $\mfj$ yielded by this construction is as large as the expression (\ref{irredspecifictypedisplay}). By Landau's equidistribution theorem, given $\mfp_1,\dots, \mfp_{L-1}$, the number of possibilities for $\mfp_L$ is at least
\[
(1 + o(1)) \frac{x}{h_{K, \mfm \mfg^{-1}} N(\mfg) \log x} \frac{1}{N(\mfp_1 \cdots \mfp_{L - 1})},
\]
as $x\to\infty$.
We again sum on possible tuples $\mfp_1, \ldots, \mfp_{L - 1}$. If we ignore the distinctness condition, then the sum on the $\mfp_j$ is at least
\[
\Big(\frac{1}{h}\log\log(X^{1/\log \log X}) + O(1)\Big)^{L-1} = (1+o(1)) \frac{1}{h^{L-1}} (\log \log x)^{L-1}.
\]
But the terms with $\mfp_j = \mfp_{j'}$ for any pair $1 \leq j \neq j' \leq L - 1$ contribute only $O((\log\log X)^{L - 3})$ to the sum. Hence, we may omit the distinctness condition without changing the asymptotic formula for the sum. We divide by $(t_i-1)! \prod_{1\le j \le h,~j\ne i}t_j!$ to avoid overcounting and conclude as in the proof of the upper bound.

\begin{example} Let $K = \Q(\sqrt{-23})$, $\alpha = \frac{1+\sqrt{-23}}{2}$ and $\mfm = (3)$. In this case, $\Cl(K) \cong \Z/3\Z$ (so that $h=3$), $(\alpha) = \mfp_1 \mfq_1$, and $(\mfm) = \mfp_1 \mfp_2$,
where $\mfp_1, \mfp_2, \mfq_1$ are distinct prime ideals of order $3$ in $\Cl(K)$  of respective norms $3, 3$, and $2$. Hence, $\mfg = (\alpha,\mfm) = \mfp_1$.
We choose the numbering of the ideal classes $\cC_1, \cC_2, \cC_3$ so that $\cC_1 = [\mfp_1]$. Then $\mfg$ has type $\tau' = (1,0,0)$. There is a unique irreducible type $\tau$ relative to $\tau'$, namely $\tau=(3,0,0)$. Thus, $\tau-\tau' = (2,0,0)$ and $L=2$. We have $N(\mfg) = N(\mfp_1) = 3$ and 
\[ \Phi(\mfm \mfg^{-1}) = \Phi(\mfp_2) = \frac{h_{K,\mfp_2}}{h_K}. \]
Using the well-known formula for the strict ray class number appearing, for example, as Proposition 2.1 on \cite[p. 50]{childress09}, we find that $h_{K,\mfp_2}/h_K = 1$. Plugging all of this into Theorem \ref{irredprog}, we conclude that as $x\to\infty$,
\[ \Pi(x;(3),\frac{1}{2}(1+\sqrt{-23})) \sim \frac{1}{27} \frac{x}{\log x} \log \log x. \]
\end{example}

\section*{Acknowledgements}
Work of the first-named author is supported by NSF award DMS-1402268.

\providecommand{\bysame}{\leavevmode\hbox to3em{\hrulefill}\thinspace}
\providecommand{\MR}{\relax\ifhmode\unskip\space\fi MR }
% \MRhref is called by the amsart/book/proc definition of \MR.
\providecommand{\MRhref}[2]{%
  \href{http://www.ams.org/mathscinet-getitem?mr=#1}{#2}
}
\providecommand{\href}[2]{#2}


\begin{thebibliography}{1}

\bibitem{childress09}
N.~Childress, \emph{Class field theory}, Universitext, Springer, New York,
  2009.

\bibitem{GR09}
A.~Geroldinger and I.Z. Ruzsa, \emph{Combinatorial number theory and additive
  group theory}, Advanced Courses in Mathematics. CRM Barcelona, Birkh\"auser
  Verlag, Basel, 2009.

\bibitem{hr17}
G.H. Hardy and S.~Ramanujan, \emph{The normal number of prime factors of a
  number $n$}, Quart. J. Math. \textbf{48} (1917), 76--92.

\bibitem{hw00}
G.H. Hardy and E.M. Wright, \emph{An introduction to the theory of numbers},
  5th ed., Oxford University Press, Oxford, 2000.

\bibitem{lan18}
E.~Landau, \emph{\"{U}ber {I}deale und {P}rimideale in {I}dealklassen}, Math.
  Z. \textbf{2} (1918), no.~1-2, 52--154.

\bibitem{narkiewicz04}
W.~Narkiewicz, \emph{Elementary and analytic theory of algebraic numbers}, 3rd
  ed., Springer Monographs in Mathematics, Springer-Verlag, Berlin, 2004.

\bibitem{poleek}
P.~Pollack, \emph{An elemental {E}rd{\H o}s-{K}ac theorem for algebraic number
  fields}, Proc. Amer. Math. Soc., to appear.

\bibitem{remond66}
P.~R{\'e}mond, \emph{\'{E}tude asymptotique de certaines partitions dans
  certains semi-groupes}, Ann. Sci. \'Ecole Norm. Sup. (3) \textbf{83} (1966),
  343--410.

\end{thebibliography}
\end{document}